\newcommand\ggo{{\mathfrak g}}
\newcommand\kk{{\mathfrak k}}
\newcommand\RR{\mathbb R}
\newcommand\ad{\operatorname{ad}}
\newcommand\Ad{\operatorname{Ad}}
\newcommand\Iso{\operatorname{Iso}}
\theoremstyle{plain}
\newtheorem{thm}{Theorem}[section]
\newtheorem{lem}[thm]{Lemma}
\newtheorem{prop}[thm]{Proposition}
\theoremstyle{definition}
\newtheorem{rem}[thm]{Remark}
\begin{document}

\title[The mean curvature flow of subgroups on  Lie groups]{The mean curvature flow of subgroups on Lie groups of dimension three}

\author[Arroyo]{Romina M. Arroyo}
\address{FAMAF \& CIEM - Universidad Nacional de C\'ordoba. Av  Medina Allende s/n - Ciudad Universitaria.
ZIP:X5000HUA - Cordoba, Argentina}
\email{arroyo@famaf.unc.edu.ar}
\thanks{RA has been partially supported by CONICET, FONCyT and SeCyT-UNC}

\author[Ovando]{Gabriela P. Ovando}
\address{Departamento de Matem\'atica, ECEN - FCEIA, Universidad Nacional de Rosario. Pellegrini 250, 2000 Rosario, Santa Fe, Argentina}
\email{gabriela@fceia.unr.edu.ar}
\thanks{GO has been partially supported by CONICET, FONCyT and SeCyT-UNR}

\author[S\'aez]{Mariel S\'aez}
\address{Facultad de Matem\'atica, P. Universidad Cat\'olica de Chile, Av. Vicuna Mackenna 4860, 690444 Santiago, Chile.}
\email{mariel@uc.cl}

\thanks{{\it (2000) Mathematics Subject Classification}:  53A10, 53C42, 53C44, 22E25, 22F30}

\thanks{{\it Key words and phrases}:   solitons, mean curvature flow, solvmanifolds}



\begin{abstract}  
In this work we study the existence of solutions to  the Mean Curvature Flow for which the initial condition has the structure of a two-dimensional Lie subgroup within a  Lie group of dimension three. 
We consider Lie groups with a fixed left-invariant metric and 
first observe that if the Lie group is unimodular, then every  Lie subgroup is a minimal surface (hence a trivial solution). For this reason we focus on non-unimodular Lie groups, finding the evolution of every Lie subgroup of dimension 2 (within a 3 dimensional Lie group). These evolutions are self-similar for abelian subgroups (i.e. evolve by isometries), but not self-similar in the other cases.

\end{abstract}

\maketitle

\setcounter{tocdepth}{1}
\tableofcontents

\section{Introduction}\label{sec:intro}

The Mean Curvature Flow is a geometric equation that arises as the gradient flow of the area functional. More precisely, given an $(n+1)$-dimensional ambient Riemannian manifold  $(G,g)$, a $k$-submanifold  $K$ (of dimension $k$) and an embedding
$\varphi_0: K\to G$ ,  we are interested in finding family  of embeddings $\varphi_t: K\to G$ that is continuous in $t$ and that for each time $t>0$  satisfies 
$$\frac{\partial \varphi}{\partial t}=\vec{H},$$
where $\vec{H}$ is the mean curvature vector associated to the embedding $\varphi_t$. When the co-dimension is 1 (i.e. $k=n$) this can be equivalently written as 
\begin{equation} 
 g\left(\frac{\partial \varphi}{\partial t}, \nu\right)=-H, \label{eq:MCF}
  \end{equation}
where $H$ and $\nu$  are the (scalar) mean curvature and a  unit normal  of $K$, respectively.  \\

This equation has been extensively studied when the ambient manifold is $\mathbb{R}^n$ with the usual metric (see for instance \cite{ABGL} and the numerous references therein), showing that singular behavior may arise during the evolution. For more general ambient spaces, there are fewer works and concrete examples (some of them are \cite{Huisken1986ContractingCH} and \cite{Andrews}) and one of our goals in this work is to contribute to the development of that theory. \\

Self-similar solutions of the flow play a special role in the theory, due to their connection with the singularity formation and as particular examples. There are many works aimed at understanding these solutions both in the Euclidean case (e.g. \cite{AL}, \cite{BLT}, \cite{HIMW}, \cite{HMW}, \cite{CHH}, among many others) and for more general ambient spaces, see for instance \cite{BO, BO2, CoMaRi, Pi1, Pi2, LM} and the references in \cite{AOPS}. More precisely, a self-similar solution is an evolution that is determined by the flow of a conformal Killing vector field $V$, i.e. a vector field that  satisfies
$\mathcal{L}_V g= C g,$ for some function $C$.
 Due to this additional assumption, \eqref{eq:MCF}  reduces to an elliptic equation. 
 In Euclidean space these solutions correspond to evolutions by   homotheties and translations and
 often they  present additional symmetries and obstructions that are not necessarily present for  general evolutions. Moreover,  they constitute simpler examples that shed light over  phenomena that may occur in  general solutions to \eqref{eq:MCF} or at least asymptotically at their maximal existence time.
 In the particular case that $C=0$  the vector field $V$ is Killing and the corresponding solutions in $\mathbb{R}^{n+1}$ evolve by translations, for this reason we will refer to them  as translators (even in more general settings).  On the other hand,  the Lichnerowicz conjecture states  that a Riemannian manifold that admits 
 conformal transformations with  $C\ne 0$ (known as essential) is conformally equivalent to the standard sphere or the Euclidean space.   This conjecture was proved by works of  M. Obata \cite{Obata} and J. Ferrand \cite{Fer} in the compact case and in  general in \cite{A,A2,Fer,F}, hence in our case
  we can reduce our study to Killing vector fields (since our spaces are not conformal to the sphere nor Euclidean space).\\

The main goal of this work is to explore simple examples of evolutions that can be found in non-Euclidean manifolds and that have additional algebraic structure. This idea has been successfully exploited for other geometric flows, for instance for the Ricci flow in \cite{AC,Bohm,BL1,BL2,IJ,IJL,Lau1,Lau2,Lott,Payne}. 
With these examples for the Ricci flow in mind,
 a secondary goal in this work is to understand  whether  the underlying algebraic structure of the initial conditions is connected  to the possible evolutions. 
More precisely, we assume that the ambient manifold $G$ is determined by a Lie group of dimension 3 and we look for solutions that emanate from 
 subgroups of $G$ of dimension 2. In the Euclidean setting these examples are all trivial, since they are planes that are minimal surfaces and, hence, static solutions to the flow. In contrast, in this work we compute the evolution of  several non-trivial translators that have a subgroup as initial condition.
 Moreover, these translating solutions
  allow us to conjecture and compute
 all the evolutions by Mean Curvature Flow with initial condition given by a  subgroup of dimension 2 (within a 3 dimensional Lie group), even in the case  they are not translators.\\

In order to state our main theorem we first recall that $3$-dimensional real Lie algebras have been classified up to isomorphism.  A more precise description of them will be discussed in Section \ref{sec:preliminaries}, but they can be divided into two classes: the ones associated to a unimodular Lie group and those that are non-unimodular. In the former case, all subgroups define a minimal surface (as in Euclidean space), while for non-unimodular  groups this is not the case and we have three families that can be studied (they will be described precisely in Section \ref{sec:preliminaries}).

Our main theorem can be then stated as follows 

\begin{thm} \label{thm:mainintro}
	
 	Let $G$ be a non-unimodular  Lie group of dimension three equipped with a fixed left-invariant metric. Let $K$ denote an abelian subgroup of $G$ with  non-vanishing mean curvature, then its mean curvature is constant and there is a family of translations $\varphi_t$ such that $\varphi_t(K)$ is a solution to \eqref{eq:MCF}.
\end{thm}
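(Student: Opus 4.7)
The proof naturally splits into two parts. First, the scalar mean curvature is constant along $K$: since $K$ is a Lie subgroup and the metric is left-invariant, each left translation $L_k$ with $k\in K$ is an isometry of $(G,g)$ that preserves $K$ setwise, and these maps act transitively on $K$. Any isometric invariant of the embedding $K\hookrightarrow G$ is therefore constant along $K$; in particular the scalar mean curvature $H$ is constant, which establishes the first conclusion.

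Next, producing a translator amounts to exhibiting a Killing vector field $V$ on $(G,g)$ whose normal component along $K$ equals $-H$, since the flow of such a $V$ will furnish the family $\varphi_t$. My candidate is the right-invariant extension $V=(-H\nu_0)^R$, where $\nu_0\in\ggo$ is the unit normal to the Lie algebra $\kk$ of $K$; right-invariant fields are Killing for any left-invariant metric. Using $X^R(p)=(dL_p)_e\Ad(p^{-1})X$ together with the left-invariance of $g$ and of $\nu$, the required condition $g(V,\nu)\equiv -H$ reduces to the algebraic identity
\[
g(\Ad(k^{-1})\nu_0,\nu_0)_e=1 \quad\text{for every } k\in K.
\]
When $\kk$ is an abelian ideal of $\ggo$, for $X\in\kk$ the operator $\ad(X)$ maps $\ggo$ into $\kk$, and abelianity of $\kk$ then forces $\ad(X)^2=0$ on $\ggo$. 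The exponential series collapses to $\Ad(\exp(-X))\nu_0=\nu_0-[X,\nu_0]\in\nu_0+\kk$, and since $\nu_0\perp\kk$ the inner product with $\nu_0$ evaluates to $1$, as required.

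The main obstacle I anticipate is the case of an abelian $\kk$ that is not an ideal of $\ggo$, where the right-invariant candidate above generically has exponentially varying normal component along $K$ and the elementary construction breaks down. The plan here is to combine the Bianchi classification of three-dimensional non-unimodular Lie algebras with a case-by-case analysis of their abelian two-dimensional subalgebras: in each non-ideal occurrence one must either identify a Killing field arising from additional symmetries of the particular left-invariant metric (for instance, the extra rotational Killing fields present when $(G,g)$ is isometric to a product such as $\HH^2\times\RR$), or extract a structural constraint that rules the configuration out under the hypothesis $H\ne 0$. Combining these cases with the clean ideal-case computation sketched above will then yield the full statement.
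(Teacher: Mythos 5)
Your first step (constancy of $H$ via the transitive action of $K$ on itself by isometries preserving $K$) is sound, and is essentially the paper's Lemma \ref{lema1} in disguise; the paper goes slightly further and extracts the formula $H=-\operatorname{tr}\ad_\nu$, which it then uses throughout. Your second step is a genuinely different and more conceptual route than the paper's: the paper never argues uniformly, but instead runs a case-by-case verification (Section \ref{sec:solitons}, Theorem \ref{main theorem}) over the explicit list of subgroups of $S_{3,\lambda}$, $S_{3,\lambda_1,\lambda_2}$, $S'_{3,\lambda}$, checking in each case whether some Killing field $V$ satisfies $g(V,\nu)=-H$. Your observation that the right-invariant extension of $-H\nu_0$ works whenever $\kk$ is an ideal (indeed, abelianity is not even needed there: if $\kk$ is an ideal then $\Ad(k^{-1})\nu_0-\nu_0\in\kk\perp\nu_0$ for all $k$ in the connected subgroup, with no need for $\ad(X)^2=0$) is a clean argument that covers $\mathcal K_0$ in all three families at once, and is arguably an improvement on the paper's presentation for those cases.

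However, the proof is not complete: the non-ideal abelian case is left as a ``plan,'' and this is precisely where the content of the theorem lies, so it is a genuine gap rather than a routine omission. Such subalgebras do occur in the classification: in $S_{3,\lambda_1,0}$ (with $\lambda_1\neq 0$) the subalgebra $\kk_{2,a}=\textrm{span}\{E_2,E_3+aE_1\}$ is abelian, is not an ideal for $a\neq 0$, and has $H=-\lambda_1 a/\sqrt{1+a^2}\neq 0$. Your proposed fallback --- enlarging the space of Killing fields using extra symmetries of the metric --- is exactly what the paper attempts for this group (whose metric $e^{-2\lambda_1 z}dx^2+dy^2+dz^2$ is a product $\HH^2\times\RR$ with a four-dimensional isometry group), and the paper's own computation for $\mathcal K_{2,a}$ concludes that the equation $g(V,\nu)=-H$ forces $a=0$: no Killing field, including the extra one, has constant normal component along this subgroup. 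Geometrically, $\mathcal K_{2,a}$ is (hypercycle)$\times\RR$ in $\HH^2\times\RR$, and a non-geodesic hypercycle does not evolve by isometries under curve shortening flow. So the missing case cannot be closed by the strategy you outline, and you should confront it directly; the dichotomy that actually drives the paper's computations is ideal versus non-ideal subalgebra rather than abelian versus non-abelian, and reconciling your (correct) ideal-case argument with the statement as phrased is the real remaining work.
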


 A more precise statement of this result is Theorem \ref{main theorem}, that is stated after all the definitions that are necessary to give a more detailed description of the result. We briefly comment on the hypotheses: we first fix a left-invariant metric on $G$  for which the basis defined  in Section \ref{sec:preliminaries}  is orthonormal (see \eqref{metrics} for the explicit metrics). We then observe that
on unimodular groups $G$ all subgroups have vanishing mean curvature, that  is they are minimal surfaces (as in Euclidean space), hence static solutions to the Mean Curvature Flow. In contrast, for non-unimodular groups, their subgroups have constant mean curvature that is not necessarily 0.  Among these subgroups, the evolution of the
 abelian ones is given by translations. For the non-abelian case we have the following result.

\begin{thm} \label{thm:mainintro2}
	
 	Let $G$ be a non-unimodular  Lie group of dimension three equipped with a fixed left-invariant metric. Let $K$ denote a non-abelian subgroup of $G$ with  non-vanishing mean curvature, then its mean curvature is constant and there is an explicit solution to \eqref{eq:MCF} that exists for all time, for each time $t$ has constant mean curvature and it is not a self-similar solution.
	
\end{thm}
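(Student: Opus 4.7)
My plan is to reduce \eqref{eq:MCF} to an ODE by exploiting the homogeneity of $K$, to integrate that ODE explicitly in each of the three non-unimodular families using the classification recalled in Section \ref{sec:preliminaries}, and to rule out self-similarity by comparing the resulting normal velocity with the Killing fields of $(G,g)$.

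The first step is to observe that, since the ambient metric is left-invariant, the restrictions $L_k|_K$ with $k\in K$ are ambient isometries acting transitively on $K$; hence every extrinsic invariant of $K$, in particular $H$, is constant. The same symmetry argument predicts the shape of the evolution: each time-slice $\Sigma_t$ must be invariant under the $L_K$-action and therefore a right coset $\Sigma_t=K\cdot p(t)$ for some curve $p(t)\in G$ with $p(0)=e$. Making the ansatz $\varphi_t(k)=k\,p(t)$ and keeping only the normal component of the velocity, \eqref{eq:MCF} collapses to the first-order ODE
\[
\dot p(t)=\vec H_{p(t)},
\]
where $\vec H_{p(t)}$ denotes the mean curvature vector of $K\cdot p(t)$ at $p(t)$.

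To solve this ODE I would use left-invariance to identify $T_{p(t)}G\cong \ggo$ via $dL_{p(t)^{-1}}$. Under this identification the tangent space of $K\cdot p(t)$ at $p(t)$ becomes $\Ad(p(t)^{-1})\kk$, and the unit normal is a unit vector in its orthogonal complement. The scalar mean curvature $H(p(t))$ is then obtained from the Koszul formula for the Levi-Civita connection, which in the left-invariant setting involves only the inner product and the structure constants. Since $\kk^\perp$ is one-dimensional I would write $p(t)=\exp_G(s(t)\,Y)$ for a unit $Y\in\kk^\perp$, reducing the ODE to a scalar equation for $s(t)$; in each of the three non-unimodular families this scalar equation is expected to be integrable in elementary terms and its right-hand side bounded, yielding an explicit solution defined for all $t\ge 0$ and showing that $H$ is spatially constant on each slice because the slice $K\cdot p(t)$ is again $L_K$-homogeneous.

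For non-self-similarity, I would use the discussion in the introduction: any self-similar solution in our setting is a translator generated by a Killing vector field. Translators have a scalar mean curvature independent of $t$ and consecutive slices are ambient-isometric via the flow of the Killing field. I expect both properties to fail here, because the closed-form expression for $H$ as a function of $s(t)$ depends nontrivially on $t$. As a cross-check one can list the Killing algebra of $(G,g)$ from the structure constants and verify directly that none of its elements has the required normal component along $\Sigma_t$. The main obstacle I anticipate is the case-by-case computation of $\vec H_{p(t)}$ and the associated scalar ODE: although mechanical, it requires careful bookkeeping of the bracket of $\ggo$, the inner product, and the action of $\Ad(\exp_G(s(t)Y))$ on $\kk$ in each of the three families. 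A secondary point to clarify is that the ansatz $\varphi_t(k)=k\,p(t)$ produces a genuine MCF solution after absorbing tangential motion into a time-dependent reparameterization of $K$.
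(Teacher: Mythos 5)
Your proposal is correct and follows essentially the same route as the paper: the right-coset ansatz $\Sigma_t=K\cdot p(t)$ is exactly the paper's time-modified parametrization in disguise (for instance $\mathcal K^t_{1,b}=\mathcal K_{1,b}\cdot\left(0,\tfrac{e^{(\lambda_1+\lambda_2)\lambda_2 t}-1}{\lambda_2}b,0\right)$ in $S_{3,\lambda_1,\lambda_2}$), it makes each slice $L_K$-homogeneous so that $H$ is spatially constant, and it reduces \eqref{eq:MCF} to the same elementary ODE that Section \ref{sec:solutions} integrates explicitly. Your non-self-similarity argument --- the slicewise mean curvature depends nontrivially on $t$, and the Killing-field analysis shows no translator exists for non-abelian, non-minimal subgroups --- is likewise the combination of Theorem \ref{main theorem} and the explicit formula for $H^t$ used in the paper.
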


We remark that the classification of 3 dimensional Lie groups and their Lie subgroups was already known in the literature (see for instance \cite{MP}), but they are often written up to certain normalization. In forthcoming work we are interested in exploring the moduli space of solutions to Mean Curvature Flow when we vary the parameters that define the ambient space. For this reason we explicitly compute  Lie groups and subgroups with  all the parameters.\\

A natural question that arises is whether in higher dimensions subgroups that evolve by translation also agree with the abelian subgroups.
We finish this work by showing that this is not the case, since we can exhibit an example of four dimensional Lie group with a subgroup of dimension three that is not abelian and evolves by translation. Our example is a nilpotent subgroup, a class that agrees with the family of abelian subgroups in 2 dimensions, but strictly contains the class of abelian subgroups in higher dimensions. Hence, 
we conjecture that in higher dimensions
the class that evolves by translation is defined by
 nilpotent subgroups.\\

The paper is organized as follows: Section \ref{sec:preliminaries} includes all the background on 3 dimensional Lie groups, their subgroups and  computations of the associated geometric quantities  in this context. In Section \ref{sec:solitons} we impose an evolution determined by a Killing vector field and find solutions to \eqref{eq:MCF}. Section \ref{sec:solutions} includes the explicit solutions and the
proof of Theorems \ref{thm:mainintro} and \ref{thm:mainintro2}.  We finish in Section \ref{counterex} by showing the evolution by translation of non-abelian subgroup of dimension 3 within a four dimensional ambient Lie group.


\section{Preliminaries on Lie groups of  dimension three} \label{sec:preliminaries}

The goal of this section is to provide  preliminaries on  Lie groups equipped with a left-invariant metric that will be necessary to study our solutions to Mean Curvature Flow. We also discuss some existing classification results in dimension 3 and compute all the families of relevant subgroups in our context.


Let $(G,g)$ be a Lie group equipped with a left-invariant metric. 
Let $\nabla$ denote the Levi-Civita connection. The Koszul formula for left-invariant vector fields $U,V\in \ggo$ establishes
$$2 \nabla _U V = [U,V]- \ad^t(U)(V) - \ad^t(V)(U),  
$$
where  $\ggo$ is the Lie algebra associated to $G$ and $\ad^t(U)$ denotes the transpose of $\ad(U):\ggo\to \ggo$ with respect to the metric $g$.
Let $K$ denote an $n$-dimensional submanifold in $(G,g)$ of codimension one. Let $TK$ denote its tangent space equipped with the induced metric. 
Thus,  around any point $p$ in $K$ one has 
$$T_pG= T_pK \oplus \RR \nu,$$
where $\nu$ denotes a unit normal vector field to $T_pK$. Note that for an orientable manifold   $\nu$ can be chosen up to a sign, but $\vec{H}=-H \nu$ is independent of this choice. Nonetheless, there are different sign conventions for the definition of $H$ and 
 the choices in Equation \eqref{eq:MCF} are such that the solution locally decreases area, which, from the PDE point of view implies  \eqref{eq:MCF} is a parabolic partial differential equation, or equivalently $\vec{H}=\Delta_g \varphi$, where $\Delta_g $ is the Lapace Beltrami operator with respect to the induced metric $g$ on $K$. 
\\

We now analyze the geometry on $K$ induced by the embedding and, particularly, the definition of extrinsic geometric quantities. 
We denote the induced metric (or first fundamental form) on $K$  by $\bar{g}$ and define it as
$$\overline{g}(X,Y)=g(X,Y) \hbox{ for } X,Y \in TK.$$
In what remains of this section we will work with differentiable vector fields and we denote this family by $\mathfrak{X}(G)$ (respectively, $\mathfrak{X}(K)$).\\ 

Assume $\nu \in \mathfrak{X}(G)$. The second fundamental form is a  bilinear extrinsic geometric quantity 
 $\alpha: \mathfrak{X}(K)\times \mathfrak{X}(K)\to\mathbb{R}$ and it is defined as
$$\alpha(X,Y)=g(\nabla_X \nu,Y)=-g(\nu,\nabla_XY).$$
Here $\nabla$ denotes the Levi-Civita connection in $(G,g)$ and $\nu$ a suitable choice of unit normal vector field.
Note that since $g(\nu, \nu)=1$ we have that for every $X\in \mathfrak{X}(K)$,
$$ g(\nabla_X\nu, \nu)=0.$$
The Weingarten map (also called shape operator) $A_\nu: \mathfrak{X}(K)\to \mathfrak{X}(K) $ is defined by
$$A_\nu (X)=\nabla_X\nu.$$
With these definitions we have  $$\alpha(X,Y)=g(A_\nu X,Y)=- g(\nu,\nabla_XY).$$
In addition, for $X,Y\in \mathfrak{X}(K)$ one has
$$\nabla_X Y = \overline{\nabla}_XY-\alpha(X,Y) \nu, $$
where  $\overline{\nabla}$ the Levi-Civita connection on $K$.

Since the Levi-Civita connection is torsion free we have
$\nabla_XY-\nabla_YX=[X,Y]$, for all $X,\,Y\in \mathfrak{X}(G).$ {Taking  vector fields $X,Y\in \mathfrak{X}(G)$ that are tangent to $K$ and using the relationships above, we get
$$[X,Y]=\nabla_XY-\nabla_YX=\overline{\nabla}_XY -\alpha(X,Y)\nu-\overline{\nabla}_YX + \alpha(Y,X)\nu.$$
By looking at the orthogonal component one obtains that $\alpha$ is bilinear and symmetric. 
This implies that $A_\nu$ is self-adjoint:
$$g(A_{\nu}X,Y)=g(X, A_{\nu}Y).$$
The trace of $A_\nu$, denoted by $H$, is known as the {\em scalar mean curvature}, or simply {\em mean curvature}, and it is independent of parametrization. 
Submanifolds that satisfy $H\equiv 0$ are known as {\em minimal.}\\




A Lie group $G$ is said to be {\em unimodular} if and only if
$$|det \,\Ad(g)|=1, \qquad  \forall g\in G$$
where $\Ad$ denotes the adjoint representation of $G$. This implies at the Lie algebra level
 $tr\,\ad_X=0$ for any $X\in \mathfrak g$, with equivalence whenever the  Lie group $G$ is connected. Note that here $\ad$ denotes the adjoint representation at the algebra level, while $\Ad$ is defined on $G$.  \\

In this work we are going to study submanifolds  $K$  that are also subgroups of $G$. Recall that we are assuming that $G$ is equipped with a left-invariant metric, which induces  a (left-invariant) metric on $K$. The tangent spaces $T_p G$ and $T_pK$ at a point $p\in K$ are naturally identified with the corresponding Lie algebras, that we denote  $\mathfrak g$
and  $\mathfrak k$, respectively.\\


Due to the algebraic structure, the following lemma allow us to compute the mean curvature of $K,$ showing that it is constant. 

\begin{lem}\label{lema1} Let $K$ be a subgroup of a Lie group $G$ and let $g$ be a left-invariant metric on $G$. Consider on $K$ the induced metric. Then the mean curvature of $K$ is given by
	$$H = -tr\ad_{\nu}, \quad\mbox{ for $\nu$ unitary normal vector field}.$$ 
	In particular, whenever $G$ is unimodular, the Lie subgroup $K$ is minimal. 
\end{lem}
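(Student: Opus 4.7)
The plan is to compute the mean curvature directly using the Koszul formula stated at the beginning of Section \ref{sec:preliminaries}, exploiting left-invariance to work entirely at the Lie algebra level. First I would check that a unit normal to $K$ can be taken to be a left-invariant vector field. Since $K$ is a subgroup and the metric on $G$ is left-invariant, for any $k\in K$ the differential $(L_k)_\ast$ maps $T_eK\cong \kk$ isometrically onto $T_kK$; hence the left-invariant extension of a unit vector $\nu\in \ggo$ orthogonal to $\kk$ remains orthogonal to $K$ at every point of $K$. This reduces the problem to computing $H$ at the identity in terms of brackets on $\ggo$.

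Next I would pick an orthonormal basis $\{e_1,\dots,e_n\}$ of $\kk$ and complete it with $\nu$ to an orthonormal basis of $\ggo$. The mean curvature is $H=\sum_{i=1}^n g(\nabla_{e_i}\nu, e_i)$. Applying the Koszul formula gives
\begin{equation*}
2g(\nabla_{e_i}\nu, e_i) = g([e_i,\nu], e_i) - g(\ad^t(e_i)\nu, e_i) - g(\ad^t(\nu)e_i, e_i).
\end{equation*}
Moving each $\ad^t$ to its adjoint and using $[e_i,e_i]=0$, the middle term vanishes, while the first and third both equal $-g(\ad_\nu e_i, e_i)$. Thus $g(\nabla_{e_i}\nu, e_i) = -g(\ad_\nu e_i, e_i)$.

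Summing over $i$ yields $H=-\sum_{i=1}^n g(\ad_\nu e_i, e_i)$. The final step is to recognize this partial sum as a full trace: since $\ad_\nu \nu=[\nu,\nu]=0$, adding the term $g(\ad_\nu\nu,\nu)=0$ completes the sum to $\operatorname{tr}\ad_\nu$ computed in the orthonormal basis $\{e_1,\dots,e_n,\nu\}$ of $\ggo$. Hence $H=-\operatorname{tr}\ad_\nu$. The unimodular case is then immediate, since unimodularity is equivalent to $\operatorname{tr}\ad_X=0$ for every $X\in\ggo$, forcing $H\equiv 0$ and so $K$ is minimal.

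I do not expect any real obstacle here; the only point that requires a touch of care is the observation that $\nu$ may be chosen left-invariant along $K$ (so the Koszul formula is applicable with $\nu\in\ggo$), and the bookkeeping with $\ad^t$ in the Koszul identity, which needs to be done so that the term $g(\ad_\nu\nu,\nu)=0$ can be added freely to promote the trace on $\kk$ to the full trace on $\ggo$.
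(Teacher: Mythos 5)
Your proof is correct and takes essentially the same route as the paper's: a one-line Koszul-formula computation at the Lie algebra level, with the partial sum over $\kk$ promoted to the full trace via $[\nu,\nu]=0$. The only cosmetic differences are that you apply the Koszul formula to $\nabla_{e_i}\nu$ rather than to $\nabla_{e_i}e_i$ (the paper passes through $g(A_\nu X_i,X_i)=-g(\nu,\nabla_{X_i}X_i)$ first), and that you explicitly justify taking $\nu$ left-invariant along $K$, a point the paper leaves implicit.
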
 
\begin{proof} Let $\{X_i\}$ be a orthonormal basis of  $\mathfrak k$ (the Lie algebra of $K$).  From the Koszul formula one gets that
	$$g(\nabla_{X_i} X_i, \nu)=g([\nu, X_i], X_i).$$
	Thus, the mean curvature can be computed in terms of the basis $X_1, X_2, \hdots X_n, \nu$ as follows
	$$tr A_{\nu}=\sum_i g(A_{\nu}X_i, X_i)=-\sum_i g(\nu, \nabla_{X_i}X_i) = -\sum_i g([\nu, X_i], X_i)=- tr\ad(\nu).$$
	
	The second statement is a direct consequence from this formula. 	
\end{proof} 

The last concept that we will need in our study is the one of solvable Lie algebra. To that end, given a Lie algebra $\mathfrak{g}$, we first define its {\em derived series} as follows
$$D^0(\mathfrak g)=\mathfrak g, D^j(\mathfrak g)=[D^{j-1}(\mathfrak g), D^{j-1}(\mathfrak g)] \quad\mbox{ for }j\geq 1.$$
If there is $J>0$ such that $D^J(\mathfrak g)=0$, the Lie algebra is called {\em solvable} and the corresponding Lie group is called a solvable Lie group. In the case of dimension 3, all non-solvable Lie algebras are unimodular and, in particular, their subgroups have zero mean curvature. \\

\subsection{Three dimensional Lie groups with codimension one subgroups}\label{sec:subgroups}

A Lie algebra over $\mathbb R$ is a real vector space  together with a Lie bracket. The classification of $3$-dimensional real Lie algebras up to isomorphim is well-known. A list of non-abelian Lie algebras spanned by vectors $E_1, E_2, E_3$ is the following (see for instance \cite{Mi,GOV}). \\

\begin{enumerate}[(i)]
	\item $\mathfrak r_{3,\lambda}$: $[E_3, E_1]=\lambda E_1$, $[E_3, E_2]=E_1+\lambda E_2$, with $\lambda\in \RR$. For $\lambda=0$ one has the Heisenberg Lie algebra; 
	\item $\mathfrak r_{3,\lambda_1, \lambda_2}$: $[E_3, E_1]=\lambda_1 E_1$, $[E_3, E_2]=\lambda_2 E_2$, for $\lambda_1\in \mathbb R-\{0\}, \lambda_2\in \mathbb R$;
	\item $\mathfrak r_{3,\lambda}'$: $[E_3, E_1]=\lambda E_1-E_2$, $[E_3, E_2]=E_1+\lambda E_2$, $\lambda\geq 0$;
	\item $\mathfrak{sl}_2$: $[E_3, E_2]=E_1, [E_1, E_3]=2E_3, [E_1, E_2]=-2E_2$,
	\item $\mathfrak{su}_{2}$: $[E_3, E_2]= - E_1$, $[E_1, E_2]=E_3, [E_1, E_3]=- E_2$. 
\end{enumerate}

\medskip

We first discuss carefully the cases (i), (ii) and (iii) that
  correspond to solvable Lie algebras. Note that these algebras
 can be decomposed into the direct sum of vector subspaces $\mathbb R \oplus \mathbb R^2$, where $\mathbb R^2$ is the abelian ideal spanned by $E_1,E_2$. Their corresponding Lie groups can be represented, respectively, as the following groups of matrices.
 
$$
S_{3,\lambda}: \left(\begin{matrix}
e^{\lambda z} & ze^{\lambda z} & x\\
0 & e^{\lambda z} & y\\
0 & 0 & 1 \end{matrix}
\right), \quad  S_{3,\lambda_1, \lambda_2}: \left(\begin{matrix}
e^{\lambda_1 z} & 0 & x\\
0 & e^{\lambda_2 z} & y\\
0 & 0 & 1 \end{matrix}
\right),$$
$$ \quad S_{3,\lambda}': \left(\begin{matrix}
e^{\lambda z} \cos(z) & e^{\lambda z} \sin(z) & x\\
-e^{\lambda z} \sin(z) & e^{\lambda z} \cos(z) & y\\
0 & 0 & 1 \end{matrix}
\right).
$$
Moreover, they are
  exponential Lie groups that  are diffeomorphic to $\mathbb R^3$. The identification is as follows
$$(x,y,z) \quad \leftrightarrow \quad A_{(x,y,z)},$$
where $A_{(x,y,z)}$ is one of the matrices above and we can identify the corresponding operation on the Lie group. In fact, consider the matrices 
$$M_{\lambda,z}: \left(\begin{matrix}
	e^{\lambda z} & ze^{\lambda z}\\
	0 & e^{\lambda z} \\
 \end{matrix}
\right), \quad  M_{\lambda_1, \lambda_2}: \left(\begin{matrix}
	e^{\lambda_1 z} & 0 \\
	0 & e^{\lambda_2 z} \\
 \end{matrix}
\right), \qquad R_{\lambda, z}= \left(\begin{matrix}
e^{\lambda z} \cos(z) & e^{\lambda z} \sin(z)\\
-e^{\lambda z} \sin(z) & e^{\lambda z} \cos(z)
\end{matrix}\right),
$$
and let $v_i=(x_i,y_i)$ for $i=1,2$, then one has that the group operation in each case is given as follows
\begin{equation}\label{product}
\begin{array}{crcl}
S_{3,\lambda}: & (v_1, z_1)(v_2,z_2) & = &(v_1+M_{\lambda,z_1}v_2, z_1+z_2),\\
S_{3,\lambda_1, \lambda_2}: & (v_1, z_1)(v_2,z_2) & = & (v_1+M_{\lambda_1,\lambda_2,z_1}v_2,  z_1+z_2),\\
S_{3,\lambda}': & (v_1, z_1)(v_2,z_2) & = &(v_1+R_{\lambda,z_1}v_2, z_1+z_2).\\
\end{array}
	\end{equation}

The last two cases, (iv) and (v), correspond to simple Lie algebras. 
The Lie algebra $\mathfrak{sl}_2$ is  the set of $2\times 2$ real matrices of zero trace: 
$$\mathfrak{sl}_2=\left\{\left(\begin{matrix}
h & e\\
f & -h\\
\end{matrix}
\right)  \right\} \quad \mbox{ with Lie group } \quad \mathrm{SL}(2)=\left\{\left(\begin{matrix}
a & b \\
c & d\\
\end{matrix}
\right) :  ad-bc=1\right\}.$$ 
While $\mathfrak{su}_{2}$ is isomorphic to the set of $3\times 3$ real skew-symmetric  matrices:
$$\mathfrak{so}(3)=\left\{\left(\begin{matrix}
0 & e & f \\
-e & 0 & g\\
-f & -g & 0
\end{matrix}
\right)  \right\} \quad \mbox{ with Lie group } \quad \mathrm{O}(3)=\mbox{the set of
	 rigid motions of } \mathbb R^3.$$ 
Note that in  those cases the Lie groups are not simply connected. The simply connected Lie groups are respectively $\widetilde{\mathrm{SL}}(2,\mathbb R)$ and $\mathrm{SU}(2)$.

\smallskip

We finally remark that in dimension three, the unimodular Lie algebras are $\mathfrak h_3$, $\mathfrak r_{3,\lambda_1,\lambda_2}$ with $\lambda_1+\lambda_2=0$, $\mathfrak r_{3,0}'$ and the simple Lie algebras (iv) and (v).  In particular, in all these cases their subgroups are minimal. 

In the coming sections we describe the 2-dimensional subgroups in each case. We remark that a different approach of this description can be found in \cite{MP}, where the authors use Milnor's work \cite{Mi}.

\subsubsection{Subgroups of the Heisenberg Lie group $S_{3,0}$.} Let $\mathfrak a$ be a Lie subalgebra of dimension two of $\mathfrak h_3$ spanned by vectors $X,Y$. Note that $\mathfrak a$ must be abelian since $\mathfrak h_3$ is nilpotent and subalgebras of nilpotent Lie algebras are necessarily nilpotent; furthermore,  the only nilpotent Lie algebra of dimension two is the abelian one. \\

Assume that as above $E_1, E_2, E_3$ span $\mathfrak h_3$. In the case that 
 $X,Y$ belong to the subspace spanned by $E_1,E_2$, their Lie bracket is trivial  and
  $\mathfrak a$ is abelian. Moreover, 
we can choose  $\{E_1,E_2\}$ as a basis for $\mathfrak a$. 

Assume now that either $X$ or $Y$ is not in the subspace spanned by $E_1$ and $E_2$, then we can write $X=a_1E_1+a_2E_2+E_3$, $Y=b_1E_1+b_2E_2$. Thus
$$0=[X,Y]=b_2 E_1 \quad \Rightarrow \quad b_2=0,$$
which enable us to take $Y=E_1$ and assume that $a_1=0$ (up to a change of basis). Thus,  basis of a Lie subalgebra of $\mathfrak h_3$ must be  one of the following.
\begin{itemize}
	\item $\{E_1,E_2\}$, 
	\item $\{E_3+aE_2, E_1\}$ with $a\in \mathbb R$.
\end{itemize}
We conclude that the Lie subgroups of $S_{3,0}$ are the 
$$\mathcal K_0=\{(x,y,0): x,y\in \mathbb R\}, \quad \mathcal K_{a}=\{(x,az,z): x,z\in \mathbb R\}.$$

\subsubsection{Subgroups of $S_{3,\lambda}$.}\label{subgroupss3lambda} Here we assume $\lambda\neq 0$ (since otherwise we are in the previous case). 

Let $\mathfrak a$ denote a Lie subalgebra of $\mathfrak r_{3,\lambda}$ spanned by vectors $X$,$Y$. If $X$ and $Y$ belong to the subspace spanned by $E_1$ and $E_2$ then $[X,Y]=0$ and in this case a basis of this subspace is $\{E_1, E_2\}$. 

Assume that either $X$ or $Y$ does not belong to this subspace, or equivalently $X=E_3+c_1E_1+c_2E_2$ and  $Y=a_1E_1+a_2E_2$. We obtain
$$[X,Y]=\lambda a_1 E_1 + a_2(E_1+\lambda E_2)=(\lambda a_1+ a_2) E_1+ \lambda a_2 E_2.$$
On the other hand, if $[X,Y]=\alpha X +\beta Y$ for some $\alpha, \beta \in \mathbb R$, it is easy to see that $\alpha$ has to be zero and therefore we have the following system:
$$\begin{array}{rcl}
\lambda a_1+ a_2 & = & \beta a_1\\
\lambda a_2 & = & \beta a_2.
\end{array}
$$
If $\beta=0$ from the second equation one gets $a_2=0$, which implies (from the first equation) that also $a_1=0$, contradicting that $Y$ is in the basis that spans the subalgebra. Thus $\beta =\lambda$ and $a_2=0$. With a convenient change of basis, this Lie subalgebra is spanned by $E_1$ and $E_3+a E_2$. 

Thus, a Lie subalgebra of $\mathfrak r_{3,\lambda}$ with $\lambda\neq 0$ is spanned by one of the  sets below.
\begin{itemize}
	\item $\{E_1,E_2\}$, 
	\item $\{E_3+a E_2, E_1\}$ with $a\in \mathbb R$,
\end{itemize}
with corresponding Lie subgroups 
$$\mathcal K_0=\{(x,y,0): x,y\in \mathbb R\}, \quad \mathcal K_{a}=\left\{\left(x,\frac{(e^{\lambda z}-1)}{\lambda}a,
 z\right): x,z\in \mathbb R\right\}.$$

Summarizing, we have the following proposition.

\begin{prop} \label{prop11} A Lie subgroup of dimension two on  $S_{3,\lambda}$ for $\lambda\in \mathbb R$,   is one of the following sets
	
	\begin{enumerate}[(i)]
		 \item $\mathcal K_0=\{(x,y,0): x,y\in \mathbb R\}$. This is an abelian subgroup with Lie algebra spanned by $E_1$ and $E_2$. 
	
	\item $\mathcal K_{a}=\left\{\left(x,\frac{(e^{\lambda z}-1)}{\lambda}a
 ,z\right): x,z\in \mathbb R\right\}$ for $\lambda\ne 0$. The corresponding Lie algebra is spanned by
  $\{E_3+aE_2, E_1\}$ for $a\in \mathbb R$ and it  is not abelian but it is solvable.
\item  $\mathcal K_{a}=\{(x,az
 ,z): x,z\in \mathbb R\}$ for $\lambda= 0$. The corresponding Lie algebra is spanned by
  $\{E_3+aE_2, E_1\}$ for $a\in \mathbb R$ and it is abelian.
  \end{enumerate}
\end{prop}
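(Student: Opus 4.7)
The plan is to work at the Lie algebra level first, classifying all $2$-dimensional Lie subalgebras $\mathfrak{a}\subset\mathfrak{r}_{3,\lambda}$, and then integrate each one to obtain the corresponding connected Lie subgroup using the explicit multiplication \eqref{product}. Because $\mathfrak{r}_{3,\lambda}$ is solvable and simply connected via the matrix model above, the Lie correspondence applies cleanly, so the 2-dimensional connected subgroups of $S_{3,\lambda}$ are in bijection with the 2-dimensional subalgebras of $\mathfrak{r}_{3,\lambda}$.

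For the classification of subalgebras, I would first note that $\mathfrak{v}:=\operatorname{span}\{E_1,E_2\}$ is an abelian ideal of $\mathfrak{r}_{3,\lambda}$ of codimension one, and split into two cases for a $2$-dimensional subalgebra $\mathfrak{a}$. If $\mathfrak{a}\subset\mathfrak{v}$, then by dimensions $\mathfrak{a}=\mathfrak{v}$, which yields the abelian case with basis $\{E_1,E_2\}$. If $\mathfrak{a}\not\subset\mathfrak{v}$, then I can choose a basis of the form $X=E_3+c_1E_1+c_2E_2$ and $Y=a_1E_1+a_2E_2\in\mathfrak{v}$. Imposing $[X,Y]\in\mathfrak{a}$ and expanding $[X,Y]=(\lambda a_1+a_2)E_1+\lambda a_2E_2$ using the bracket relations of $\mathfrak{r}_{3,\lambda}$, one sees (since the coefficient of $X$ in $[X,Y]$ must vanish because $[X,Y]\in\mathfrak{v}$) that $[X,Y]=\beta Y$ for some $\beta$. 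A direct analysis of the $2\times 2$ linear system forces $a_2=0$ when $\lambda\neq 0$ and $a_2=0$ when $\lambda=0$ as well (the latter being immediate from the Heisenberg relations). After rescaling $Y=E_1$ and absorbing $c_1E_1$ into it, the basis takes the normalized form $\{E_3+aE_2,\,E_1\}$ for some $a\in\mathbb{R}$, which is solvable but non-abelian when $\lambda\neq 0$, and abelian when $\lambda=0$.

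The remaining step is to exponentiate each subalgebra within the ambient group using the product \eqref{product}. For $\mathfrak{a}=\operatorname{span}\{E_1,E_2\}$, the subgroup is obviously $\mathcal{K}_0=\{(x,y,0)\}$ since the third coordinate is constantly zero along trajectories of $E_1$ and $E_2$. For $\mathfrak{a}=\operatorname{span}\{E_3+aE_2, E_1\}$, I would parametrize by letting $z$ flow along $E_3+aE_2$ and $x$ along $E_1$. Using the one-parameter subgroup of $E_3+aE_2$ in the matrix model and reading off the first two coordinates, the second coordinate satisfies $\dot{y}=a\,e^{\lambda z}$ with $y(0)=0$, giving $y=\tfrac{e^{\lambda z}-1}{\lambda}a$ when $\lambda\neq 0$ and $y=az$ when $\lambda=0$ (this is the degenerate limit as $\lambda\to 0$). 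Composing with the flow of $E_1$ then produces $\mathcal{K}_a$ exactly as stated in (ii) and (iii).

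The only delicate step is the coordinate computation in the last paragraph: one must be careful to use the semidirect product structure encoded by $M_{\lambda,z}$ in \eqref{product} rather than naively composing $\exp$'s, since the group is not abelian. Apart from that, every step is routine linear algebra and integration, so I do not anticipate any serious obstacle.
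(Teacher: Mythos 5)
Your proposal is correct and follows essentially the same route as the paper: split on whether the subalgebra lies in the abelian ideal $\operatorname{span}\{E_1,E_2\}$, normalize a basis $\{E_3+c_1E_1+c_2E_2,\ a_1E_1+a_2E_2\}$, use that $[X,Y]\in\operatorname{span}\{E_1,E_2\}$ to reduce to $[X,Y]=\beta Y$, and solve the resulting $2\times 2$ system to force $a_2=0$, then exponentiate via the explicit product \eqref{product}. The only cosmetic differences are that you organize the case split through the codimension-one ideal (the paper argues the same dichotomy directly) and that you spell out the ODE $\dot y=a e^{\lambda z}$ for the integration step, which the paper leaves implicit.
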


\subsubsection{Subgroups of  $S_{3,\lambda_1,\lambda_2}$.} We assume that $\lambda_1\neq 0$ (since at least one of the $\lambda_i$ must be non-zero for $\mathfrak{r}_{3,\lambda_1,\lambda_2}$ not to be an abelian Lie algebra). As above, let $\mathfrak a$ be a Lie subalgebra of dimension two. 

As before, if $X$ and $Y$ belong to the subspace spanned by $E_1$ and $E_2$, then $[X,Y]=0$ and $\mathfrak a =\textrm{span }\{E_1,E_2\}$ is an abelian subalgebra. 

Assume that $X\notin \textrm{span }\{E_1,E_2\}$, equivalently $X=E_3+a_1E_1+a_2E_2$, and $Y=b_1E_1+b_2E_2$. In this case one has
$$[X,Y] = \lambda_1 b_1 E_1+ \lambda_2 b_2 E_2.$$
Up to an appropriate choice  of basis, we have in addition $[X,Y]=\beta Y$. Then
$$\begin{array}{rcl}
\lambda_1 b_1& = & \beta b_1\\
\lambda_2 b_2 & = & \beta b_2.
\end{array}
$$

Since $\lambda_1\neq 0$ then either $\beta=\lambda_1$ or $b_1=0$. In the former case, either 
$\lambda_2=\beta=\lambda_1$ (and $b_1, b_2$ can take any value) or $b_2=0$.
In the case that $b_1=0$ we necessarily have that $\lambda_2=\beta$ (because $b_1$ and $b_2$ cannot be 0 simultaneously).

We can summarize the results as follows.



\begin{prop}\label{prop22}  The Lie subgroups of  $S_{3,\lambda_1, \lambda_2}$ of dimension 2  are classified as follows.

	\begin{enumerate}[(i)]
		\item For any choice of $\lambda_1$ and $\lambda_2$ we have $\mathcal K_0=\{(x,y,0): x,y\in \mathbb R\}$ is an abelian subgroup with Lie subalgebra $\kk_0$ spanned by $E_1$ and $ E_2$.
		\item For  $\lambda_1\neq \lambda_2$ and $\lambda_2\ne 0$ we also have the subgroups
		\begin{enumerate}[(a)]

			\item  $\mathcal K_{1,b}=\left\{\left(x,\frac{(e^{\lambda_2 z}-1)}{\lambda_2}b,z\right): x,z\in \mathbb R\right\}$, for every  $b\in\mathbb R$. Its associated Lie subalgebra  is $\mathfrak k_{1,b}=\textrm{span }\{E_1,E_3+b E_2\}$ and it is non-abelian;
			
		\item $\mathcal K_{2,a}=\left\{\left(\frac{(e^{\lambda_1 z}-1)}{\lambda_1}a,y,z\right): y,z\in \mathbb R\right\}$ with  $a\in\mathbb R$, which is a non-abelian Lie subgroup. The associated  Lie subalgebra is 
 $\mathfrak k_{2,a}=\textrm{span }\{E_2,E_3+a E_1\}$.

		\end{enumerate}
		
		\item For  $\lambda_2= 0$ (and $\lambda_1\ne 0$) we have
		
		\begin{enumerate}[(a)] 		
		\item 	$\mathcal K_{1,b}=\{(x,bz,z): x, z\in \mathbb R\}$,  with $b\in\mathbb R$. The associated Lie subalgebra is $\mathfrak k_{1,b}=\textrm{span }\{E_1,E_3+b E_2\}$ and it is an  abelian subgroup.
		
		  \item $\mathcal K_{2,a}=\left\{\left(\frac{(e^{\lambda_1 z}-1)}{\lambda_1}a,y,z\right): y,z\in \mathbb R\right\}$ with  $a\in\mathbb R$, which is an abelian Lie subgroup. The associated  Lie subalgebra is  $\mathfrak k_{2,a}=\textrm{span }\{E_2,E_3+a E_1\}$.
		
		\end{enumerate}
		
		\item For $\lambda_1=\lambda_2$ we have 
		\begin{enumerate}[(a)]
			\item  $\mathcal K_{3,c,d}=\left\{\left(x, cx+\frac{(e^{\lambda_1 z}-1)}{\lambda_1}d,z\right): x,z\in \mathbb R\right\}$ for every fixed $c,d\in \mathbb R$. The associated subalgebra  is $\mathfrak k_{3,c,d}=\textrm{span }\{E_1+cE_2,E_3+dE_2\}$ and it is a non-abelian subgroup.

						\item $\mathcal K_{4,e,f}=\left\{\left(ey + \frac{(e^{\lambda_1 z}-1)}{\lambda_1}f , y,z\right): y,z\in \mathbb R\right\}$. The associated 
						 Lie subalgebra is $\mathfrak k_{4,e,f}=\textrm{span }\{E_2+eE_1,E_3+fE_1\}$
						and it is a non-abelian subgroup.

		\end{enumerate}
	\end{enumerate}
\end{prop}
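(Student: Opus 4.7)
The plan is to follow the same two-step strategy used for Proposition \ref{prop11}: classify the 2-dimensional subalgebras of $\mathfrak{r}_{3,\lambda_1,\lambda_2}$ up to linear change of basis, then exponentiate each one inside the matrix realization of $S_{3,\lambda_1,\lambda_2}$ to obtain explicit subgroup coordinates. The algebraic bookkeeping in the paragraphs immediately preceding the statement already reduces the problem to an eigenvalue system, so the real work is to run the resulting case split cleanly and to carry out the one-parameter subgroup integrations.

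For the subalgebra classification, let $\mathfrak{a}$ be 2-dimensional. If $\mathfrak{a}\subset\mathrm{span}\{E_1,E_2\}$, then $\mathfrak{a}=\mathrm{span}\{E_1,E_2\}$, giving the abelian $\mathcal{K}_0$ present for every choice of $(\lambda_1,\lambda_2)$. Otherwise, by subtracting from one generator an appropriate multiple of the other, I may assume $\mathfrak{a}=\mathrm{span}\{X,Y\}$ with $X=E_3+a_1E_1+a_2E_2$ and $Y=b_1E_1+b_2E_2\neq 0$. As noted in the text, the requirement $[X,Y]\in\mathfrak{a}$ (which is solvable and contains no $E_3$-component on the left-hand side) forces $[X,Y]=\beta Y$, producing the pair $(\lambda_1-\beta)b_1=0$ and $(\lambda_2-\beta)b_2=0$.

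The case analysis splits naturally on the values of $\lambda_1,\lambda_2$. When $\lambda_1\neq\lambda_2$ with $\lambda_2\neq 0$, or when $\lambda_2=0$ and $\lambda_1\neq 0$, the eigenvalue system forces $Y\in\mathbb{R}E_1$ or $Y\in\mathbb{R}E_2$; in each sub-case I use $Y$ to absorb the offending summand of $X$, reducing to the normal forms $\mathfrak{k}_{1,b}=\mathrm{span}\{E_1,E_3+bE_2\}$ and $\mathfrak{k}_{2,a}=\mathrm{span}\{E_2,E_3+aE_1\}$. When $\lambda_1=\lambda_2$, every nonzero $Y\in\mathrm{span}\{E_1,E_2\}$ is admissible with $\beta=\lambda_1$, so the family of subalgebras is parameterized by the direction of $Y$ (a point of $\mathbb{P}^1$) together with a real parameter encoding $X$ modulo $Y$; I cover $\mathbb{P}^1$ by the two affine charts $Y=E_1+cE_2$ and $Y=E_2+eE_1$ to obtain $\mathfrak{k}_{3,c,d}$ and $\mathfrak{k}_{4,e,f}$.

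For the explicit subgroup formulas I exponentiate via the matrix representation. Since $Y\in\mathrm{span}\{E_1,E_2\}$ is nilpotent, $\exp(xY)$ is simply an affine translation in the first two coordinates. For a generator of the form $E_3+bE_2$ the matrix is block-upper-triangular, and its exponential produces the characteristic factor $b(e^{\lambda_2 z}-1)/\lambda_2$ (or $bz$ when $\lambda_2=0$) in the second coordinate; multiplying $\exp(xY)\cdot\exp(zX)$ via \eqref{product} yields the listed coordinates for $\mathcal{K}_{1,b}$ and $\mathcal{K}_{2,a}$, and the analogous computation for the $\lambda_1=\lambda_2$ case produces the mixed term $cx+d(e^{\lambda_1 z}-1)/\lambda_1$ appearing in $\mathcal{K}_{3,c,d}$. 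The abelian/non-abelian dichotomy then follows by inspecting the brackets $[E_1,E_3+bE_2]=-\lambda_1 E_1$ and $[E_2,E_3+aE_1]=-\lambda_2 E_2$. I expect the main obstacle to be the bookkeeping in the degenerate case $\lambda_1=\lambda_2$, where one must verify that the two charts $\mathfrak{k}_{3,c,d}$ and $\mathfrak{k}_{4,e,f}$ together exhaust all admissible 2-dimensional subalgebras without hidden redundancy; the remaining cases reduce to one-dimensional exponentials and are routine.
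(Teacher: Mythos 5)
Your proposal follows the paper's own argument essentially verbatim: the same reduction of a two-dimensional subalgebra to the form $X=E_3+a_1E_1+a_2E_2$, $Y=b_1E_1+b_2E_2$ with $[X,Y]=\beta Y$ (the paper likewise discards the $\alpha X$ term because $[X,Y]$ has no $E_3$-component), the same eigenvalue system $(\lambda_1-\beta)b_1=(\lambda_2-\beta)b_2=0$, and the same case split on whether $\lambda_1=\lambda_2$; your exponentiation of the generators via the matrix realization supplies the passage from subalgebras to the explicit coordinate descriptions, which the paper states without computation. The projective-line/two-chart bookkeeping for $\lambda_1=\lambda_2$ is a slightly more explicit phrasing of what the paper does implicitly, and is fine.

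One substantive caveat: your criterion $[E_1,E_3+bE_2]=-\lambda_1E_1$ is correct, but if you apply it in case (iii) (where $\lambda_2=0$ and $\lambda_1\neq 0$) it yields that $\mathfrak k_{1,b}=\textrm{span}\{E_1,E_3+bE_2\}$ is \emph{non-abelian}, whereas item (iii)(a) of the statement labels $\mathcal K_{1,b}$ abelian. One can confirm this at the group level: in $S_{3,\lambda_1,0}$ one has $(1,0,0)(0,b,1)=(1,b,1)$ while $(0,b,1)(1,0,0)=(e^{\lambda_1},b,1)$, so $\mathcal K_{1,b}$ does not commute. This is a defect of the statement (only $\mathfrak k_{2,a}$ becomes abelian when $\lambda_2=0$, since $[E_2,E_3+aE_1]=-\lambda_2E_2=0$), not of your method; but as written your proof cannot establish that particular label, and you should flag the discrepancy rather than assert that the inspection "follows" the claimed dichotomy.
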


\subsubsection{Subgroups of  $S_{3,\lambda}'$.} Note that as in the previous cases the subspace $\mathfrak k=\textrm{span }\{E_1,E_2\}$ is an abelian subalgebra. 

Assume there exists a subalgebra of dimension two, spanned by vectors $X$ and $Y$, such that $X\notin \textrm{span }\{E_1,E_2\}$. Equivalently, we assume that $X=E_3+a_1 E_1+a_2 E_2$ and $ Y=b_1 E_1+b_2E_2$. Using the definition of this algebra we have
$$[X,Y]=b_1(\lambda E_1-E_2)+b_2(E_1+\lambda E_2)=(\lambda b_1+b_2) E_1 + (-b_1+\lambda b_2) E_2.$$

\medskip

Imposing that  $[X,Y]=\alpha Y$ (where $\alpha$ could be trivial) we have that
 the coefficients satisfy 
 the following system
$$\begin{array}{rcl}
	\alpha b_1 & = & \lambda b_1+b_2\\
	\alpha b_2 & = & -b_1+\lambda b_2,
\end{array}$$
that only has the trivial solution.  We conclude the following classification for 2-dimensional subgroups of 
 $S_{3,\lambda}'$.

\begin{prop}\label{prop33}  The unique  Lie subgroup of dimension two of $S_{3,\lambda}'$ is the set $\mathcal K_0=\{(x,y,0)\}$ with Lie algebra $\mathfrak k=\textrm{span}\{E_1,E_2\}$.
\end{prop}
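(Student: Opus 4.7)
The plan is to complete the computation already started in the paragraph preceding the statement, which reduces the problem to a $2\times 2$ linear system, and then observe that the system has only the trivial solution because the bracket action of $E_3$ on $\textrm{span}\{E_1,E_2\}$ has no real eigenvalues.

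First, I would note that $\textrm{span}\{E_1,E_2\}$ is automatically a $2$-dimensional abelian subalgebra of $\mathfrak r_{3,\lambda}'$, which integrates to the subgroup $\Kkr_0 = \{(x,y,0)\}$ displayed in the statement. To show that no other $2$-dimensional subalgebra exists, I would argue by contradiction: suppose $\aa$ is a $2$-dimensional subalgebra not contained in $\textrm{span}\{E_1,E_2\}$. Then one can choose a basis of $\aa$ of the form $X=E_3+a_1E_1+a_2E_2$, $Y=b_1E_1+b_2E_2$, with $(b_1,b_2)\neq(0,0)$; this is the reduction carried out just before the proposition.

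Second, I would use the bracket relations of $\mathfrak r_{3,\lambda}'$ to compute
\[
[X,Y]=b_1(\lambda E_1-E_2)+b_2(E_1+\lambda E_2)=(\lambda b_1+b_2)E_1+(-b_1+\lambda b_2)E_2,
\]
which lies in $\textrm{span}\{E_1,E_2\}$. For $\aa$ to be closed under the bracket, $[X,Y]$ must belong to $\textrm{span}\{X,Y\}$; since it has no $E_3$ component while $X$ does, the only possibility is $[X,Y]=\alpha Y$ for some $\alpha\in\RR$. Matching coefficients gives the linear system displayed in the excerpt,
\[
\begin{pmatrix} \lambda-\alpha & 1 \\ -1 & \lambda-\alpha \end{pmatrix}\begin{pmatrix} b_1\\ b_2\end{pmatrix}=\begin{pmatrix}0\\0\end{pmatrix}.
\]

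Third, I would observe that the determinant of this matrix is $(\lambda-\alpha)^2+1>0$ for every real $\alpha$, so the system forces $b_1=b_2=0$, contradicting the fact that $Y$ belongs to a basis of $\aa$. This rules out any non-trivial $2$-dimensional subalgebra and concludes the proof. There is no real obstacle in this argument; the only conceptual point is that the operator $\ad(E_3)$ restricted to $\textrm{span}\{E_1,E_2\}$ acts (up to the scalar $\lambda$) as a rotation by a right angle and therefore admits no real eigenline, which is precisely why $\mathfrak r_{3,\lambda}'$ has fewer $2$-dimensional subalgebras than the previous cases.
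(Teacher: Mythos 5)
Your proposal is correct and follows essentially the same route as the paper: reduce to $X=E_3+a_1E_1+a_2E_2$, $Y=b_1E_1+b_2E_2$, impose $[X,Y]=\alpha Y$, and conclude that the resulting linear system has only the trivial solution. Your explicit determinant computation $(\lambda-\alpha)^2+1>0$ simply makes precise the step the paper states without detail.
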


\subsubsection{Subgroups of $\mathrm{SL}(2,\mathbb R)$ and $\mathrm{SO}(3)$.} As above, we study subalgebras in the corresponding Lie algebras $\mathfrak{sl}_2$ and $\mathfrak{so}(3)$. 

We first study
 $\mathfrak{sl}_2$ and  observe that there is no abelian subalgebra of dimension two. Note first that
  the subspace $\mathfrak b=\textrm{span}\{E_2, E_3\}$ is not a subalgebra. Assume  that there is an abelian subalgebra $\kk$ spanned by $X,Y$, where
 $X=E_1+U$, $Y=V$ with $U,V\in \mathfrak b$. From the equation $0=[X,Y]$ one gets $V=0$, which leads to a contradiction. 
 
We conclude that  any subalgebra   $\mathfrak k$ of $\mathfrak{sl}_2$  with dimension two must be solvable.  That is, $\mathfrak k=\textrm{span }\{X,Y\}$ with $[X,Y]=\alpha Y$ and $\alpha \neq 0$. 
 Note in addition that that $\mathfrak k\neq\textrm{span }\{E_2, E_3\}$ since $[E_3,E_2]=E_1\notin \textrm{span }\{E_2, E_3\}$.
   
 We assume first that
$X\notin \textrm{span }\{E_2,E_3\}$ and $Y\in \textrm{span }\{E_2,E_3\}$. Equivalently, we can write $X=E_1+a_2E_2+a_3E_3$ and $Y=b_2E_2+b_3E_3$ with $[X,Y]=\alpha Y$, which gives the following system
$$\begin{array}{rcl}
	0 & = & a_3b_2-a_2b_3\\
	\alpha b_2 & = & -2 b_2\\
	\alpha b_3 & = & 2b_3.
\end{array}$$
The second and third equation imply that either $b_2=0$ or $b_3=0$ (note that at least one of them has to be non-zero for $Y$ to be non-trivial). Then, the first equation implies that  $a_2=0$ or $a_3=0$, respectively. Hence the posible bases are $\{ E_1, E_3 \}$ (with $\alpha=2$) and  $\{ E_1, E_2 \}$ (with   $\alpha=-2$).

Now, we consider the case  $Y\notin \textrm{span }\{E_2,E_3\}$ and  $X\in \textrm{span }\{E_2,E_3\}$; thus, $X=a_2E_2+a_3E_3$ and $Y=E_1+b_2E_2+b_3E_3$ with $[X,Y]=\alpha Y$. This gives the following system
$$\begin{array}{rcl}
	\alpha & = & a_3b_2-a_2b_3\\
	\alpha b_2 & = & 2 a_2\\
	\alpha b_3 & = & -2 a_3.
\end{array}$$
The solution to this system gives us the basis $X=\rho E_2+ \frac{1}{\rho}E_3,$  $ Y=E_1+\rho E_2- \frac{1}{\rho}E_3$, that is associated to the
subalgebra  of real matrices of trace zero described by
$$\left\{\left( \begin{matrix}
	t &  \frac{1}{\rho} (s-t)\\
	\rho (s+t) & -t
\end{matrix}\right) \, : \, s,t\in \mathbb R\right\}.$$

An analogous computation shows that there are no subalgebra of dimension two in
 $\mathfrak{so}(3)$. For completeness we include the computation: It is easy to show exactly as above  that 
there is no 
abelian one and that  subspace spanned by $E_2$ and $E_3$ is not a  subalgebra. 

Assume that there is a 2-dimensional subalgebra which is spanned by elements $X, Y$ such that $X=E_1+a_2E_2+a_3E_3$,  $Y=b_2E_2+b_3E_3$ with $[X,Y]=\alpha Y$. This gives the following system
$$\begin{array}{rcl}
	0 & = & a_3b_2-a_2b_3, 
	\\
	\alpha b_2 & = & -b_3,\\
	\alpha b_3 & = & b_2,
	\end{array}
$$ which has the only the solution $b_2=b_3=0$, that is not admissible (since $Y$ would be 0).
 
Now assume that $X=a_2E_2+a_3E_3, \, Y=E_1+b_2E_2+b_3E_3$ with $[X,Y]=\alpha Y$. In this case we get
 $$\begin{array}{rcl}
 	\alpha & = & -(a_3b_2-a_2b_3), 
	\\
 	\alpha b_2 & = & a_3,\\
 	\alpha b_3 & = & -a_2,
 \end{array}
 $$
which also only has the trivial solution. This proves the next proposition.

\begin{prop}\label{prop23} $\quad$
\begin{itemize}
\item  Lie subgroups of dimension two of the Lie group $\mathrm{SL}(2,\mathbb R)$ are classified as follows.
\begin{enumerate}[(i)]
\item $\mathcal K_1=\left\{  \left( \begin{matrix}
a & b\\
0 & a^{-1} \end{matrix} \right)
\right \}$, for $a\in \mathbb R\setminus\{0\}$, a solvable subgroup with Lie algebra spanned by  $E_1, E_3$.
\item $\mathcal K_2=\left\{  \left( \begin{matrix}
a & 0\\
b & a^{-1} \end{matrix} \right)
\right \}$ for $a\in \mathbb R\setminus\{0\}$, a solvable subgroup with Lie algebra spanned by $E_1, E_2$.
\item $\mathcal K_3$ a Lie subgroup with Lie subalgebra spanned by $E_2+E_3, E_1-E_3+E_2.$
\end{enumerate}
\item The Lie group $\mathrm{SO}(3)$ with Lie algebra $\mathfrak{so}(3)$ does not have 2-dimensional Lie subgroups.
\end{itemize}

\end{prop}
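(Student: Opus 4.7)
My plan is to combine the algebraic classification of 2-dimensional subalgebras of $\mathfrak{sl}_2$ and $\mathfrak{so}(3)$ that has been carried out in the paragraphs immediately preceding the statement with the standard Lie subalgebra--subgroup correspondence: every 2-dimensional subalgebra of $\mathfrak{g}$ integrates to a unique connected (immersed) Lie subgroup of a connected Lie group with Lie algebra $\mathfrak{g}$. Once the subalgebras are catalogued, the only remaining task is to realize the corresponding subgroups explicitly by exponentiation.

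For the second bullet, concerning $\mathrm{SO}(3)$, I would simply invoke the analysis carried out just above the statement, which exhausts the two possibilities (abelian, then non-abelian solvable) and in each case reduces the problem to a linear system whose only solution is trivial. Consequently $\mathfrak{so}(3)$ has no 2-dimensional subalgebra; since $\mathrm{SO}(3)$ is connected and compact, every Lie subgroup is closed, and the Lie correspondence immediately gives the second bullet.

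For the first bullet, concerning $\mathrm{SL}(2,\mathbb{R})$, the preceding analysis isolated three families of 2-dimensional subalgebras of $\mathfrak{sl}_2$: $\mathrm{span}\{E_1,E_3\}$ (arising from $\beta=2$), $\mathrm{span}\{E_1,E_2\}$ (from $\beta=-2$), and $\mathrm{span}\{\rho E_2+\rho^{-1}E_3,\;E_1+\rho E_2-\rho^{-1}E_3\}$. I would fix the matrix realization $E_1=\mathrm{diag}(1,-1)$, $E_3=\bigl(\begin{smallmatrix}0&1\\0&0\end{smallmatrix}\bigr)$, $E_2=\bigl(\begin{smallmatrix}0&0\\1&0\end{smallmatrix}\bigr)$, which satisfies the bracket relations defining $\mathfrak{sl}_2$. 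Then $\mathrm{span}\{E_1,E_3\}$ is exactly the subalgebra of upper triangular traceless matrices; since this subalgebra is solvable and the closed upper triangular subgroup of $\mathrm{SL}(2,\mathbb{R})$ is connected and 2-dimensional, its exponential image fills it out, producing $\mathcal{K}_1$. The symmetric argument for $\mathrm{span}\{E_1,E_2\}$ yields the lower triangular subgroup $\mathcal{K}_2$.

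The main obstacle is the third family, where the generators $X=\rho E_2+\rho^{-1}E_3$ and $Y=E_1+\rho E_2-\rho^{-1}E_3$ neither commute nor are simultaneously triangularizable, so a direct Baker--Campbell--Hausdorff computation is impractical. Fortunately the statement only names the subalgebra, and existence and uniqueness of the connected subgroup $\mathcal{K}_3$ tangent to it are supplied by the Lie correspondence; an explicit matrix description as the set of trace-zero matrices
\[
\left\{\begin{pmatrix} t & \rho^{-1}(s-t) \\ \rho(s+t) & -t \end{pmatrix} : s,t\in\mathbb{R}\right\}
\]
was already recorded just above the proposition. One may additionally observe that conjugation by a diagonal element of $\mathrm{SL}(2,\mathbb{R})$ rescales $\rho$, so $\mathcal{K}_3$ is in fact conjugate to $\mathcal{K}_1$; taking $\rho=1$ recovers the basis $\{E_2+E_3,\;E_1+E_2-E_3\}$ listed in (iii).
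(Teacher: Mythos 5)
Your proposal is correct and follows essentially the same route as the paper: the classification of two-dimensional subalgebras of $\mathfrak{sl}_2$ and $\mathfrak{so}(3)$ carried out in the computations immediately preceding the proposition, combined with the subalgebra--subgroup correspondence (which the paper leaves implicit in its ``this proves the next proposition''). The only quibble is your closing aside: rescaling $\rho$ by a diagonal conjugation shows the various $\mathcal K_3$'s are mutually conjugate, but conjugacy to $\mathcal K_1$ requires the additional (easy) observation that $\mathrm{SL}(2,\mathbb R)$ acts transitively on lines in $\mathbb R^2$; this remark is not needed for the statement itself.
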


\section{Solitons among Lie subgroups in dimension three} \label{sec:solitons}

In this section we study translating solutions to Mean Curvature Flow that have a subgroup as initial condition.  More precisely, given $K$ as one of the subgroups described in Section \ref{sec:preliminaries} we look for a one-parameter family of isometries  $\psi_t$ of the ambient manifold $G$ such that $\psi_0(K)$ is an initial embedding of $K$ into $G$ and $\psi_t (K)$ satisfies \eqref{eq:MCF}.\\

As explained in  Section \ref{sec:preliminaries},  the mean curvature is the trace of the shape operator. When $\psi_t$ is a one-parameter group of isometries of the ambient manifold $G$ we can compute the mean curvature at time $t$ as follows: take an orthonormal basis $\{X_1, \hdots, X_n\}$ of $TK$ at a point $p$, we have  
$$H_0=tr(A_{\nu})=\sum_{i=1}^n g(A_\nu X_i, X_i)=-\sum_{i=1}^n g(\nu, \nabla_{X_i} X_i).$$
At the point $\psi_t(p) \in K_t=\psi_t(K)$ we have the orthonormal basis $\psi_t(X_i)$ of $T_{\psi_t(p)}K_t$ with  unit normal vector $\nu_t=\psi_t(\nu)$. Thus we can compute the mean curvature at time $t$ as follows.
$$\begin{array}{rcl}
	H_t & = &  -\sum_{i=1}^n g(\nu_t, \nabla_{d\psi_t X_i} d\psi_t X_i) \\
	& = &  -\sum_{i=1}^n g(\nu,\nabla_{X_i} X_i) \\
& = & 	-\sum_{i=1}^n g(d\psi_t\nu,d\psi_t \nabla_{X_i} X_i) = H_0.
\end{array}
$$
This computation simplifies the evolution equation to 
$$\Big\langle \frac{\partial \psi_t}{\partial t} , \nu \Big\rangle = H_0 ,  \quad \psi_0=\psi.$$	

Moreover, since $ \psi_t$ is a one parameter family of isometries we have that $\frac{\partial \psi_t}{\partial t}(p)=V(p), $ where $V$ is a Killing vector field. Hence, we obtain that  \eqref{eq:MCF} is equivalent to 
\begin{equation}\label{eq-EvolutionKilling1}
	g\left(V, \nu\right)=-H.
\end{equation}

From Lemma \ref{lema1} we have that once we choose a left-invariant metric on a Lie group $G$  then mean curvature of a Lie subgroup $K$ is constant. Moreover, for unimodular Lie groups, Lie subgroups are minimal and their evolutions are trivial (they have speed 0 and in consequence they  remain static in time). The unimodular Lie groups in dimension three are
the Heisenberg Lie group $S_{3,0}$, the Sol manifold $S_{3,\lambda, -\lambda}$, $S'_{3,0}, \, \mathrm{SL}(2,\mathbb R)$ and $\mathrm{SO}(3)$. \\

In what follows we concentrate in the study of non-unimodular groups.
We first obtain  explicit Riemannian metrics on each of these manifolds. To this end, 
on every Lie algebra, we consider a metric such that the basis $\{E_1,E_2,E_3\}$ is orthonormal. On the corresponding Lie group this corresponds to a left-invariant metric that imposes that the left-invariant vector fields $E_1, E_2, E_3$ define an orthonormal basis. These vector fields can be described as
$$E_1(x,y,z) :=\frac{d}{ds}\Big|_{s=0} (x,y,z)(s,0,0),$$ $$E_2(x,y,z) :=\frac{d}{ds}\Big|_{s=0} (x,y,z)(0,s,0),$$ 
$$E_3(x,y,z):=\frac{d}{ds}\Big|_{s=0} (x,y,z)(0,0,s).$$
Then, the  desired Riemannian metrics are the following. 
\begin{equation} \label{metrics}
	\begin{array}{ll}
		S_{3,\lambda}:& \qquad g=e^{-2\lambda z}dx^2 + (1+z^2)e^{-2\lambda z}dy^2 + dz^2 - z e^{-2\lambda z}dx dy,\\
		S_{3,\lambda_1,\lambda_2}:& \qquad g=e^{-2\lambda_1z}dx^2 + e^{-2\lambda_2z}dy^2 + dz^2, \\
		S_{3,\lambda}':& \qquad g=e^{-2\lambda z}dx^2 + e^{-2\lambda z}dy^2 + dz^2.
	\end{array}
\end{equation}

To study the evolutions we will also need to obtain the Killing vector fields for each group.
Killing vectors are associated  to isometries of the left-invariant metric, which were computed in \cite{HL} and \cite{CR} for  unimodular and non-unimodular  groups, respectively. First define a left-translation $L_p$ as
$$L_{(p_1, p_2, p_3)}(x_1,x_2, x_3)=(p_1, p_2, p_3)(x_1,x_2, x_3),$$
where $p=(p_1, p_2, p_3), \, x=(x_1,x_2, x_3) \in G$ and $px$ refers to $p$ applied to  $x$ (from the left) according to the group operation.

A left-translation $L_{p}$ by an element $p$ of the group is an isometry and
every isometry of  the Lie group $G$ can be written as $f=\varphi \circ L_p$, where $\varphi$ is an  isometry that preserves the identity element
and $p\in G$.  Thus, the group of isometries is essentially determined by the isotropy subgroup.
 
Let $W$ be an element of the Lie algebra of $G$, thus $\exp(tW)\in G$ and it holds
$$\exp(tW)p=R_p\exp(tW)\qquad \mbox{for any }p\in G,$$
where $R_p$ is a right-translation.
Thus $L_{\exp(tW)}(p)=R_p \exp(tW)$ and the corresponding Killing vector field is given by the right-invariant vector field
$\tilde{W}(p)=\frac{d}{dt}|_{t=0}R_p\exp(tW)$. 

In order to write a right-invariant vector field in terms of the basis of left-invariant we note that
$$R_p\exp(tW)=L_p \circ L_{p^{-1}}\circ R_p\exp(tW)=L_p(\Ad(p^{-1}(tW))\qquad \mbox{for }p\in G.$$
This last equation means that the right-invariant vector field for $W$ at $p\in G$ coincides with left-invariant vector field at $p$ given by $\Ad(p^{-1})W$. 

\begin{rem}
A Lie group with isometries given by left-translations becomes a homogeneous space and a solvable Lie group equipped with a left-invariant metric is known as  a  solvmanifold. 
\end{rem}

We conclude this section by showing basis of Killing vector fields in our groups and solutions to \eqref{eq-EvolutionKilling1} that arise from the subgroups computed in Section \ref{sec:preliminaries}.
More precisely, the rest of this section is devoted to prove the following theorem.

\begin{thm} \label{main theorem} 
	
	Let $G$ be a non-unimodular  Lie group of dimension three equipped with a left-invariant metric given by \eqref{metrics}.  Then  $K\leq G$ it is a solution to \eqref{eq-EvolutionKilling1} if and only if $K$ is either abelian or it has  mean curvature $H=0$. All these solutions can be described as follows
	
	
	\begin{enumerate}[(i)]
		\item On $S_{3,\lambda}$: the Lie subgroup $\mathcal K_0$ is a translator in the direction $V=2\lambda \tilde{E_3}$.
			
			The Lie subgroup $\mathcal K_a$ is a translator only for $a=0$ and it is minimal. 
		
	\item On $S_{3,\lambda_1,\lambda_2}$ with  $\lambda_1+\lambda_2\neq 0$.
	\begin{itemize}
		\item For $\lambda_1\neq \lambda_2$, with  $\lambda_2\neq 0$,  the subgroups $\mathcal K_0$ defines a non-trivial translator in the direction $V=(\lambda_1+\lambda_2) \tilde{E}_3$. On the other hand  $\mathcal K_{1,0}$ and $\mathcal K_{2,0}$ are minimal surfaces (hence static solutions).

		\item For $\lambda_2=0$ the non-trivial solutions are given by the subgroup $\mathcal K_0$, that  defines  a translator for a non-tangent Killing vector field $V=\lambda_1 \tilde{E}_3$, 
		 and  
	        the Lie subgroup  $\mathcal K_{1,b}$ that  is a translator for the non-tangent Killing vector field $-\lambda_1 b\tilde{E}_2$. The  Lie subgroup $\mathcal K_{2,0}$ is minimal.
	\item For $\lambda_1=\lambda_2:=\lambda$: the Lie subgroup $\mathcal K_0$ is a translator in non-tangential  direction given by  $V=2\lambda \tilde{E}_3$. 
	
	The Lie subgroups $\mathcal K_{3,c,0}$ and $\mathcal K_{4,e,0}$  are minimal. 
	\end{itemize}
\item On $S_{3,\lambda}'$ the Lie subgroup $\mathcal K_0$ is a translator in the (non-tangential) direction $V=2\lambda \tilde{E}_3$ 
\end{enumerate}

\end{thm}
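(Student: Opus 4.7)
The plan is to verify the theorem case-by-case, treating each non-unimodular group $S_{3,\lambda}$, $S_{3,\lambda_1,\lambda_2}$ and $S_{3,\lambda}'$ together with each of its two-dimensional subgroups classified in Propositions \ref{prop11}, \ref{prop22} and \ref{prop33}. In every case the task splits into three computations: first, write down a basis of Killing vector fields on $G$; second, compute a unit normal $\nu$ to $\kk$ and the mean curvature $H=-tr\,\ad_\nu$ from Lemma \ref{lema1}; and third, decide whether the equation $g(V,\nu)=-H$ on $K$ admits a Killing solution $V$.

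For the first step I use the discussion preceding the theorem: every left-translation $L_{\exp(tW)}$ yields a Killing field $\tilde W$, given at $p\in G$ by the value at $p$ of the left-invariant field determined by $\Ad(p^{-1})W$. Differentiating the product laws \eqref{product} I obtain explicit formulas for $\Ad(p^{-1})$ on each non-unimodular group, and hence express $\tilde E_1,\tilde E_2,\tilde E_3$ as combinations of $E_1,E_2,E_3$ whose coefficients are elementary functions of $(x,y,z)$ (exponentials, trigonometric functions, and a polynomial $z$-factor in the case $S_{3,\lambda}$). For the second step, using the subalgebra bases recorded in Section \ref{sec:subgroups} and Gram--Schmidt against the orthonormal frame $\{E_1,E_2,E_3\}$, I read off $\nu$ as a unit vector in the orthogonal complement of $\kk$ and compute $tr\,\ad_\nu$ by a direct matrix trace: for the abelian families $\mathcal K_0$ one finds $H$ equal to a nonzero constant of the form $-2\lambda$ or $-(\lambda_1+\lambda_2)$, while for the minimal sub-cases $\mathcal K_{1,0}$, $\mathcal K_{2,0}$, $\mathcal K_{3,c,0}$ and $\mathcal K_{4,e,0}$ the trace vanishes.

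For the third step I write a candidate $V=c_1\tilde E_1+c_2\tilde E_2+c_3\tilde E_3$ and compute $g(V,\nu)$ as a function on $K$. Since $H$ is constant, equation \eqref{eq-EvolutionKilling1} demands that this function be identically $-H$. When $K$ is abelian the $(x,y,z)$-dependence in the coefficients of the $\tilde E_i$ collapses along $K$---either because $\nu$ is aligned with directions on which $\Ad$ acts trivially, or because $K$ is parametrised by coordinates that are absorbed into $\nu$---so a constant combination already yields $g(V,\nu)\equiv\text{const}$, and the $c_i$ can be solved to match $-H$; this pinpoints the explicit $V$ asserted in each abelian case. When $K$ is non-abelian, the coefficients of the $\tilde E_i$ restricted to $K$ contain linearly independent non-constant functions of the parametrising coordinate (typically $e^{\lambda_i z}$, $e^{\lambda z}\cos z$, $e^{\lambda z}\sin z$, or $ze^{\lambda z}$), and requiring $g(V,\nu)$ to be constant forces all $c_i$ to vanish, hence $V\equiv 0$ and necessarily $H=0$.

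The main obstacle is this last verification in the non-abelian cases: one must rule out subtle cancellations between the non-constant terms, which amounts to confirming linear independence over $\RR$ of the elementary functions that enter the left-invariant decomposition of each $\tilde E_i$. This is straightforward once the expressions for $\Ad(p^{-1})$ are in hand, but must be performed separately for every non-abelian subgroup in Propositions \ref{prop11} and \ref{prop22}, with particular care in the degenerate subcase $\lambda_1=\lambda_2$ where polynomial-in-$z$ terms appear and could a priori conspire with the exponential terms. Once this algebra is complete, the equivalence in the theorem follows and the explicit directions $V$ listed in cases (i)--(iii) are obtained as the unique solutions produced in the abelian matching step.
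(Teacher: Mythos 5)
Your overall strategy coincides with the paper's: proceed group by group and subgroup by subgroup, compute $H=-\mathrm{tr}\,\ad_\nu$ via Lemma \ref{lema1}, express the Killing fields in the left-invariant frame, and test whether $g(V,\nu)=-H$ admits a constant solution. However, there is a genuine gap in your treatment of the ``only if'' direction: you take as your ansatz only $V=c_1\tilde E_1+c_2\tilde E_2+c_3\tilde E_3$, i.e.\ the three-dimensional family of Killing fields generated by left-translations. This is not the full space of Killing fields in several of the cases the theorem covers. For $S_{3,\lambda_1,0}$ the isometry group is four-dimensional and there is an additional rotational Killing field $\tilde T=-zE_2+yE_3$; for $\lambda_1=\lambda_2=\lambda$ the metric is hyperbolic and the isometry group is $\mathrm{SO}(3,1)$, contributing three further independent Killing fields $\tilde T_1,\tilde T_2,\tilde T_3$; and $S'_{3,\lambda}$ also carries an extra rotational Killing field. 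To conclude that a non-abelian subgroup with $H\neq 0$ is \emph{not} a solution of \eqref{eq-EvolutionKilling1}, you must rule out every Killing field, so these extra generators must be included in $V$ and their coefficients shown to vanish (or to be tangential). The paper does exactly this, writing e.g.\ $V=\eta\tilde E_1+\beta\tilde E_2+\mu\tilde E_3+\rho\tilde T_1+\theta\tilde T_2+\kappa\tilde T_3$ in the case $\lambda_1=\lambda_2$ and checking that the $x^2 e^{\lambda z}$ and $y^2 e^{\lambda z}$ terms force $\theta=\kappa=0$. Note also that the extra fields matter even in an affirmative case: for $\lambda_2=0$ the subgroup $\mathcal K_{1,b}$ is a translator for every $b$, and one must still verify that the rotational coefficient $\rho$ is forced to vanish before reading off the direction $-\lambda_1 b\tilde E_2$.

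A minor imprecision: in the non-abelian cases the correct conclusion is not that all $c_i$ vanish and $V\equiv 0$, but that the tangential coefficients are unconstrained while the constancy requirement forces the parameter defining the subgroup (the $a$, $b$, $d$ or $f$ in Propositions \ref{prop11} and \ref{prop22}) to be zero, whence $H=0$. With the full isometry groups incorporated, your linear-independence argument for the functions $e^{\lambda_i z}$, $ze^{\lambda z}$, $e^{\lambda z}\cos z$, $e^{\lambda z}\sin z$ (now augmented by terms such as $x^2e^{\lambda z}$, $y^2e^{\lambda z}$ and $z+by$) carries through and the proof closes.
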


Note that Theorem \ref{main theorem} implies Theorem \ref{thm:mainintro}, while Theorem \ref{thm:mainintro2} can be deduced from the following result.

\begin{thm} \label{thm:main2}
	
 	Let $G$ be a non-unimodular  Lie group of dimension three equipped with a left-invariant metric given by \eqref{metrics}. Let $K$ denote a non-abelian subgroup of $G$ with  non-vanishing mean curvature, then its evolution  is given by one of the following
	
 \begin{itemize}
   
 \item On $S_{3,\lambda}$ we have the solution 
  $$\varphi(x,z,t)=\left(x, \frac{e^{\lambda (z+\lambda t)}-1}{\lambda} a, z\right) \hbox{ with } x,z, a\in \RR. $$

 \smallskip
 
 \item  
   On $S_{3,\lambda_1, \lambda_2}$ we have the following
    solutions

\begin{enumerate}[(a)]

			\item  For  $\lambda_1\neq \lambda_2$ and $\lambda_2\ne 0$ 
			\begin{align*} \varphi_1(x,z,t)&=\left(x,\frac{(e^{\lambda_2 z + (\lambda_1+\lambda_2)\lambda_2t}-1)}{\lambda_2}b,z\right)  \hbox{ with } x,z\in \mathbb R, \\
			\varphi_2(y,z,t)&=\left(\frac{(e^{\lambda_1 z +(\lambda_1+\lambda_2)\lambda_1t}-1)}{\lambda_1}a,y, z\right)  \hbox{ with } y,z\in \mathbb R.
			\end{align*}

		\item For  $\lambda_2= 0$ (and $\lambda_1\ne 0$) we have
		
		\begin{align*} \varphi_1(x,z,t)&=(x, b(z+\lambda_1 t),z),\\\varphi_2(y,z,t)&=\left(\frac{(e^{\lambda_1 z + (\lambda_1+\lambda_2)\lambda_1t}-1)}{\lambda_1}a,y, z\right)  \hbox{ with } y,z\in \mathbb R.
			\end{align*}

		\item For $\lambda_1=\lambda_2:=\lambda$ we have 
		\begin{align*} \varphi_1(x,z,t)&=\left(x, cx+\frac{(e^{\lambda( z+2\lambda t)}-1)}{\lambda}d,z\right)\hbox{ where } x,z\in \mathbb R, \\ 
		\varphi_2 (y,z,t)&=\left(ey + \frac{(e^{\lambda ( z +\lambda t)}-1)}{\lambda}f , y, z\right) \hbox{ where } y,z\in \mathbb R.
		\end{align*}

		\end{enumerate}
		\end{itemize}

\end{thm}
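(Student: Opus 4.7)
The plan is to verify, for each of the explicit families $\varphi_t$ listed in the statement, that the MCF equation \eqref{eq:MCF} holds. The key structural observation — which also explains where the formulas come from — is that on each non-unimodular ambient group $S_{3,\lambda}$ and $S_{3,\lambda_1,\lambda_2}$, the left-invariant metric \eqref{metrics} does not depend on one of the ``abelian coordinates'' ($x$ or $y$, depending on the case), so translation along that coordinate is an isometry of $G$ — in fact a left-translation by an element of $G$. In every case, the surface $\varphi_t(K)$ differs from a subgroup of the same algebraic family as $K$, with a time-rescaled parameter, by exactly such a coordinate translation; call this time-dependent subgroup $K'_t$. Combined with Lemma \ref{lema1}, the isometry $\varphi_t(K)\cong K'_t$ immediately yields that $\varphi_t(K)$ has constant mean curvature $H_t=H(K'_t)$ at each time $t$, which takes care of the ``constant mean curvature'' part of the statement and reduces the remaining work to a verification of the normal-velocity identity.

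As a concrete illustration, for $K=\mathcal{K}_a\subset S_{3,\lambda}$ one rewrites
\[ \varphi(x,z,t) = \left(x,\tfrac{e^{\lambda z}e^{\lambda^2 t}-1}{\lambda}\,a,\,z\right) \]
to recognize $\varphi_t(K)$ as the $y$-translate of $\mathcal{K}_{ae^{\lambda^2 t}}$ by a constant (in $y$), so $b(t)=ae^{\lambda^2 t}$ and, by Lemma \ref{lema1}, $H_t$ equals the explicit function of $b(t)$ computed from the trace of $\ad$. An analogous identification works in every remaining case of the statement, with $b(t)$ always exponential and its exponent read off from the bracket data of the ambient Lie algebra. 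The verification of \eqref{eq:MCF} is then algebraic: compute $\partial_t\varphi$ from the parametrization and convert it to the orthonormal basis $E_1,E_2,E_3$ using the coordinate-to-$E$-basis formulas implied by the group products \eqref{product}; read off the unit normal $\nu_t$ from orthogonality to the tangent algebra of $K'_t$; and check that $g(\partial_t\varphi,\nu_t)=-H_t$. This last identity reduces to a single algebraic equality forced by the time-coefficient in the exponential — equivalently, imposing \eqref{eq:MCF} on the ansatz $\varphi_t(K)=L_{h(t)}\mathcal{K}_{b(t)}$ (with $h(t)$ in the translation-invariant direction) leads to a linear ODE $b'=c\,b$ whose exponential solution is exactly what appears in the statement.

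The main obstacle is the size of the case-analysis: Propositions \ref{prop11}--\ref{prop23} identify several non-abelian subgroup families across several non-unimodular ambient groups, and each must be checked separately. However, all cases follow the same template and use no ingredients beyond Lemma \ref{lema1}, the metrics \eqref{metrics}, and the basis-conversion formulas recorded in Section \ref{sec:preliminaries}. The slightly more intricate case is $\mathcal{K}_{3,c,d}\subset S_{3,\lambda,\lambda}$, whose parametrization carries an extra $cx$-term in the $y$-coordinate; but this term is shared by $\mathcal{K}_{3,c,d}$ and $\mathcal{K}_{3,c,d(t)}$, so $\varphi_t(K)$ is still a $y$-translate of a member of the same subgroup family, and the remainder of the argument is unchanged. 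Finally, the assertion that the evolutions are not self-similar follows a posteriori because the parameter $b(t)$ changes with $t$ while the isotropy of these ambient metrics is discrete, so no Killing field of $G$ can carry $K'_0$ onto $K'_t$ for $t>0$.
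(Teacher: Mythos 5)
Your proposal is correct and follows essentially the same route as the paper: Section \ref{sec:solutions} likewise posits the ansatz obtained by replacing the factor $e^{\lambda z}$ attached to the subgroup parameter by $c(t)e^{\lambda z}$ (equivalently, your $L_{h(t)}\mathcal{K}_{b(t)}$ picture), derives the resulting linear ODE for $c(t)$, and verifies \eqref{eq:MCF} by the same normal-velocity computation, carried out in detail for $\mathcal{K}_{1,b}\subset S_{3,\lambda_1,\lambda_2}$ (the paper computes $H_t$ from the fundamental forms directly, where you invoke Lemma \ref{lema1} through the isometry identification -- a harmless streamlining). The only caveat is your closing justification of non-self-similarity via ``discrete isotropy,'' which fails for $S_{3,\lambda,\lambda}$ and $S_{3,\lambda,0}$ (whose isometry groups have dimension $6$ and $4$); but that assertion belongs to Theorem \ref{thm:mainintro2} rather than to the statement at hand, and the paper obtains it instead from the translator classification of Theorem \ref{main theorem}.
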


In the forthcoming sections we compute in each case the conditions under which the subgroups obtained in Section \ref{sec:subgroups} define a translator.

\subsection{The Lie group $S_{3,\lambda}$}

Since $S_{3,0}$ is unimodular we assume $\lambda\ne0$.
A basis of left-invariant vector fields of $S_{3,\lambda}$ is given by
$$E_1(x,y,z)=e^{\lambda z}\partial_x, \, E_2(x,y,z)=z e^{\lambda z}\partial_x+e^{\lambda z}\partial_y, \, E_3(x,y,z) = \partial_z, $$
satisfying the  non-trivial Lie bracket relations:
$$[E_3,E_1]=\lambda E_1, \qquad [E_3,E_2]=E_1+\lambda E_2.$$

 In $S_{3,\lambda}$ translations on the left are generated by 
 $$\begin{array}{c}
	(x,y,z) \to (x+c, y, z),  \\
	(x,y,z) \to (x, y+c, z),  \\ 
	(x,y,z) \to (e^{\lambda c}x+ c e^{\lambda c}y, e^{\lambda c}y, z+c),
\end{array}
$$
where $c\in \mathbb{R}$. They correspond to the basis of Killing vectors
\begin{align*}
	\widetilde{E}_1(x,y,z) & = \frac{d}{ds}\Big|_{s=0}(s,0,0)(x,y,z)=\partial_x=e^{-\lambda z}E_1,\\
	\widetilde{E}_2(x,y,z) & =  \partial_y= e^{-\lambda z}(E_2-z E_1),\\
	\widetilde{E}_3(x,y,z) & =  (\lambda x+y) \, \partial_x + \lambda y \, \partial_y+ \partial_z= e^{-\lambda z}(\lambda x -\lambda y z+y)E_1 + \lambda y e^{-\lambda z}E_2+ E_3.
\end{align*}

For $\lambda \neq 0$, the isometry group has dimension three and corresponds to the group of translations on the left.  Thus any  Killing vector field in $S_{3,\lambda}$ can be written as \begin{equation}V=\eta \tilde{E}_1 + \beta \tilde{E}_2+ \mu \tilde{E}_3 \hbox{ for }\eta, \beta, \mu \in \RR. \label{generickilligs3lambda}\end{equation}  
 
Now we study the evolutions by isometries of the subugroups described in Section \ref{subgroupss3lambda} .

\subsubsection{ The subgroup $\mathcal K_0$ }
	
	Recall that $\mathcal K_0$ is associated to the Lie algebra $\kk=\textrm{span }\{E_1, E_2\}$ and its normal
 vector is $\nu=  E_3$. Its trace is given by $tr(\ad_{\nu})= 2\lambda$. Thus its mean curvature is $$H=-2\lambda.$$
From Equation \eqref{eq-EvolutionKilling1} and Equation \eqref{generickilligs3lambda}
one gets 	$$2\lambda=\mu, $$
	which gives $\mathcal K_0$ is a translator for the Killing vector  $V=\eta \tilde{E}_1 + \beta \tilde{E}_2+ 2\lambda  \tilde{E}_3$. Since $\tilde{E}_1$ and $\tilde{E}_2$ are tangent vector, the constants $\eta$ and $\beta$ can be taken 0 by reparametrizing the solution.
	
\subsubsection{ The subgroup $\mathcal K_a$ }	
	
The Lie subgroup $\mathcal K_a$ is associated to the Lie algebra  \\
$\kk_a=\textrm{span }\left\{\frac{1}{\sqrt{1+a^2}}(aE_2+E_3),E_1\right\}$. The unit normal is given by  $\nu=\frac{1}{\sqrt{1+a^2}}(aE_3-E_2)$ and the trace of $\ad_{\nu}$ is  $tr(\ad_{\nu})=\frac{2a\lambda}{\sqrt{1+a^2}}$.  Thus the mean curvature of  $\mathcal K_a$ is
$$H=-\frac{2a\lambda}{\sqrt{1+a^2}}.$$
Imposing Equation\eqref{eq-EvolutionKilling1} for $V$ given by  \eqref{generickilligs3lambda} one obtains 	$$ 2a\lambda +a \mu= (\beta+\lambda \mu y)e^{-\lambda z}.
$$ 
Since the right-hand side is constant, while the left hand-side is a function of $y$ and $z$, we necessarily have that $\beta=\mu=0$. This implies that $a=0$ and $\mathcal K_0$ is a minimal surface.

\subsection{ The group  $S_{3,\lambda_1,\lambda_2}$}
	
Recall that for $\lambda_1+\lambda_2=0$  the group  $S_{3,\lambda_1,\lambda_2}$ is unimodular, hence throughout this section we assume that $\lambda_1+\lambda_2\ne0$.
	
A basis of left-invariant vector fields in  $S_{3,\lambda_1,\lambda_2}$ is given by
		$$E_1(x,y,z)=e^{\lambda_1 z}\partial_x,\,  E_2(x,y,z)=e^{\lambda_2z}\partial_y, \, E_3(x,y,z) = \partial_z, $$
which satisfy the  non-trivial Lie bracket relations:
	$$[E_3,E_1]=\lambda_1 E_1, \qquad [E_3,E_2]=\lambda_2 E_2.$$
	
Translations on the left by elements of the groups	 are generated by the following 
	one-parameter groups 	
	$$\begin{array}{c}
		(x,y,z) \to (x+c, y, z),  \\
		(x,y,z) \to (x, y+c, z),  \\ 
		(x,y,z) \to (e^{\lambda_1 c}x, e^{\lambda_2c}y, z+c),
	\end{array}
	$$
	where $c\in \mathbb{R}$. They correspond to the Killing vectors
	\begin{align*}
		\widetilde{E}_1(x,y,z) & = \frac{d}{ds}\Big|_{s=0}(s,0,0)(x,y,z)=\partial_x=e^{-\lambda_1 z}E_1,\\
		\widetilde{E}_2(x,y,z) & =  \partial_y= e^{-\lambda_2 z}E_2,\\
		\widetilde{E}_3(x,y,z) & =  \lambda_1 x \, \partial_x + \lambda_2 y \, \partial_y+ \partial_z= \lambda_1 x e^{-\lambda_1 z}E_1 + \lambda_2 y e^{-\lambda_2 z}E_2+ E_3.
	\end{align*}
	
	Denote by $L_a, L_b, L_c$ the maps associated to the previous Killing fields. More precisely, let
	$$L_a(x,y,z) = (x+a, y, z), L_b	(x,y,z) = (x, y+b, z), L_c(x,y,z) = (e^{\lambda_1 c}x, e^{\lambda_2c}y, z+c).$$
	
	Since $L: G\to \Iso(G)$ is a homomorphism of groups,  we have the following relations.	\begin{equation}\label{relationsIso}
		L_a\circ L_b=L_b\circ L_a, L_c \circ L_a=L_{(e^{\lambda_1c}a,0,0)}\circ L_c, \mbox{ and } L_c \circ L_b=L_{(0,e^{\lambda_2c}b,0)}\circ L_c. 
	\end{equation}
	
	In $S_{3,\lambda_1,\lambda_2}$ 
	  there is another isometry that fixes the identity element and it is given by  reversing orientation:
	$$(x,y,z) \to (-x,y,z).$$
	When 
	 $\lambda_1\neq \lambda_2$ and $\lambda_2\neq 0$,   the connected component of the identity of the isometry group is completely described by  left-translations:  $\Iso(S_{3,\lambda_1,\lambda_2})=S_{3,\lambda_1,\lambda_2}$, for $\lambda_1\neq\lambda_2$ and $\lambda\neq 0$.
 
 On the other hand,
  if $\lambda_2=0$ (and $\lambda_1\neq 0$), one has  in addition the following monoparametric group of isometries:
 $$T_a(x,y,z)=\left(\begin{matrix}
 	1 & 0 & 0\\
 	0 & 	\cos(au) & -\sin(au) \\
 	0 & 	\sin(au) & \cos(au)
 \end{matrix} \right)\left(\begin{matrix}
 	x\\y\\z \end{matrix} \right).$$
 This family is associated to the  one-parameter family of  Killing vector fields given by  $$\widetilde{T}(x,y,z)=-a z \partial_y + a y \partial_z= a(-zE_2+yE_3),$$ 
 for any $a \in \RR$. In particular, if $\lambda_2=0$ 
 the isometry group has dimension four and it is given by $\Iso(S_{3,\lambda,0})=S_{3,\lambda,0}. \RR$. \\

 For $\lambda_1=\lambda_2:=\lambda$, the corresponding Lie group is isometric to the hyperbolic space $\mathbb H_\lambda$ and  the isometry group is known and the connected component of the identity is given by $\mathrm{SO}(3,1)$. 

Summarizing, for $\lambda_1+\lambda_2\neq 0$ the group $\Iso(S_{3,\lambda_1, \lambda_2},g)$ is given by 
\begin{itemize}
	\item  $S_{3,\lambda_1, \lambda_2}$ if $\lambda_1\neq\lambda_2$ and $\lambda_2\neq 0$.
	\item $S_{3,\lambda_1, 0}.\mathrm{SO}(2)$ if  $\lambda_2=0$.
	\item$S_{3,\lambda, \lambda}.\mathrm{O}(3)=\mathrm{SO}(3,1)$ if $\lambda_1=\lambda_2:=\lambda$.
\end{itemize}

Now, we proceed to study solutions to \eqref{eq-EvolutionKilling1} for the subgroups described in Proposition~\ref{prop22}.

\subsubsection{The case $\lambda_1\ne \lambda_2$ and $\lambda_2\ne 0$. }

In this case, the isometry group consists only of translations on the left and any Killing vector field $V$ can be written as $V=\eta \tilde{E}_1 + \beta \tilde{E}_2+ \mu \tilde{E}_3$.  Recall that we assume $\lambda_1+\lambda_2\ne 0$ (i.e.  $S_{3,\lambda_1, \lambda_2}$ is  not unimodular.)
		
\begin{enumerate}[(i)]
	\item Consider the group $\mathcal K_0$ associated to the Lie algebra $\kk=\textrm{span }\{E_1, E_2\}$. The unit normal is given by $\nu=E_3$ and the trace of $\ad_{\nu}$ equals $\lambda_1+\lambda_2$. Thus 
	$$H=-(\lambda_1+\lambda_2).$$
	Then,  Equation \eqref{eq-EvolutionKilling1} gives: 
	$$g\left(V, E_3\right)= \lambda_1+\lambda_2 \quad \Longrightarrow \quad \mu=\lambda_1+\lambda_2.$$
	Since $\tilde{E}_1$ and $\tilde{E}_2$ are tangential, up to reparametrization $\mathcal K_0$ is a translator in the direction $V=(\lambda_1+\lambda_2) \tilde{E}_3$
	
	\item  The Lie subgroup $\mathcal K_{1,b}$ with Lie subalgebra $\kk_{1,b}=\textrm{span }\{E_3+bE_2, E_1\}$ for $b\in \RR$ has a unit normal given by
	   $\nu=\frac{1}{\sqrt{1+b^2}}(bE_3-E_2)$. The trace of $\ad_{\nu}$ equals $(\lambda_1+\lambda_2)\frac{b}{\sqrt{1+b^2}}$, or equivalently
	$$H=-(\lambda_1+\lambda_2)\frac{b}{\sqrt{1+b^2}}.$$ 
	 Combining this computation with Equation \eqref{eq-EvolutionKilling1} we have
	$$g\left(V, -\frac{1}{\sqrt{1+b^2}}(E_2-b E_3)\right)=b\frac{\lambda_1+\lambda_2}{\sqrt{1+b^2}},$$
	which  gives 
	$-e^{-\lambda_2 z}\beta -\mu (\lambda_2 y e^{-\lambda_2 z}-b) =b(\lambda_1+\lambda_2)$, or equivalently
	$$ -e^{-\lambda_2 z}(\beta + \mu \lambda_2 y)=b(\lambda_1+\lambda_2-\mu).$$ 
	Since the left-hand side is a function of $y$ and $z$ and the right-hand side is constant, we must have $\beta=\mu=b(\lambda_1+\lambda_2-\mu)=0$. 
	This implies that 
	 the Lie subgroup $\mathcal K_{1,b}$ is a translator if and only if $b=0$, which is a minimal surface.

\item For the Lie subgroup $\mathcal K_{2,a}$ with Lie subalgebra $\kk_{2,a}=\textrm{span }\{E_3+aE_1, E_2\}$,  $a\in \RR$ the unit normal vector is   $\nu=\frac{1}{\sqrt{1+a^2}}(aE_3-E_1)$. Thus the mean curvature is 
$$H=-\textrm{tr} (\ad_{\nu})= -(\lambda_1+\lambda_2)\frac{a}{\sqrt{1+a^2}}.$$
As in the case above, we conclude by
 taking a Killing vector field $V=\eta \tilde{E}_1+\beta\tilde{E}_2+\mu\tilde{E}_3$ that $a=0$ and $\mathcal K_{2,0}$ is minimal. 
 
		
\end{enumerate}

	\smallskip
	
	\subsubsection{ The case $\lambda_2=0$} In this situation the isometry group has dimension four. In particular, every Killing vector field $V$ has the form $V=\eta \tilde{E}_1 + \beta \tilde{E}_2+ \mu \tilde{E}_3 + \rho \tilde{T}$, for $\tilde{T}(x,y,z)=-zE_2+yE_3$.  
	
	\begin{enumerate}
		\item For $\mathcal K_0$ with Lie algebra $\kk_0=\textrm{span }\{E_1,E_2\}$  the unit normal vector is $\nu=E_3$ and  the mean curvature 
		$$H=-\lambda_1.$$
		Equation \eqref{eq-EvolutionKilling1} is equivalent to
		$$\lambda_1= \mu +y \rho.$$
		This implies that necessarily $\rho=0$.
		Since any vector of the form $\eta\tilde{E}_1+\beta \tilde{E}_2$ is tangential, $\mathcal K_0$ is a translator in the direction $V=\lambda_1 \tilde{E}_3$. 
		\item For the Lie subgroup $\mathcal K_{1,b}$ with Lie subalgebra $\kk_{1,b}=\textrm{span }\{E_1, E_3+bE_2\}$ the unit normal vector is $\nu=-\frac{1}{\sqrt{1+b^2}}(E_2-bE_3)$. Thus the trace of $\ad_{\nu}$ is $\lambda_1\frac{b}{\sqrt{1+b^2}}$, or equivalently
			$$H=-\lambda_1\frac{b}{\sqrt{1+b^2}}.$$ 
			Equation \eqref{eq-EvolutionKilling1} implies
			 $$-\beta +b \mu +\rho(z +by)=b\lambda_1.$$
			This equation implies that $\rho=0$. Recalling that $b E_2+E_3$ is tangential, we		
	 conclude for any $b$ that $\mathcal K_{1,b}$ defines a non-trivial translator in the direction  $V= -\lambda_1 b \tilde{E}_2$.
			
			\item For the Lie subgroup $\mathcal K_{2,a}$ with Lie subalgebra $\kk_{1,b}=\textrm{span }\{E_2, E_3+aE_1\}$ the unit vector field is $\nu=\frac{1}{\sqrt{1+a^2}}(aE_3-E_1)$ and
			$$H=-\lambda_1\frac{a}{\sqrt{1+a^2}}.$$ 
			Equation \eqref{eq-EvolutionKilling1} implies
			 $$-e^{-\lambda_1 z}(\eta+\mu\lambda_1 x) +a(\mu+y \rho)=a\lambda_1.$$
			For this equation to hold for every $x,\, y$ and $z$ necessarily $\eta=\mu=\rho=a(\lambda_1-\mu)=0$. This implies $a=0$ and $\mathcal K_{2,0}$ is minimal.
							\end{enumerate}

	\subsubsection{ The case $\lambda_1=\lambda_2:=\lambda$} We need to do a more detailed description of the isometries in this case. Observe that in this case
	 one has the skew-symmetric derivation $D(E_1)=a E_2, D(E_2)=-a E_1, D(E_3)=0$ for any $a \in \RR$.  Thus,  for each $a$ one obtains the following monoparametric group of orthogonal automorphisms of  $S$:
	$$T_u(x,y,z)=\left(\begin{matrix}
		\cos(au) & -\sin(au) & 0\\
		\sin(au) & \cos(au) & 0 \\
		0 & 0 & 1
	\end{matrix} \right)\left(\begin{matrix}
		x\\y\\z \end{matrix} \right).$$
		The corresponding Killing vector field is spanned by:
	$$\widetilde{T}_1(x,y,z)=- y \partial_x +  x \partial_y = -y e^{-\lambda z}E_1+xe^{-\lambda z}E_2.$$
	Thus, together with  relationships in \eqref{relationsIso}, one has
	$$T_u\circ L_a=L_{T_u(a,0,0)}\circ T_u, \quad T_u\circ L_b=L_{T_u(0,b,0)}\circ T_u, \quad T_u\circ L_c=L_c\circ T_u.$$
	
	According to results in \cite{AOPS} the additional two linearly independent vector fields are
	$$\tilde{T}_2(x,y,z)=2x E_1- \lambda x^2 e^{\lambda z} E_3, \quad \tilde{T}_3(x,y,z)=2y E_2- \lambda y^2 e^{\lambda z} E_3.$$
	Hence a general Killing vector field can be written as $V=\eta\tilde{E}_1 + \beta \tilde{E}_2 + \mu \tilde{E}_3 + \rho \tilde{T}_1  + \theta \tilde{T}_2 + \kappa \tilde{T}_3$. 
	
	Now we proceed to compute the possible evolutions.
	
\begin{enumerate}[(i)]
\item  For $\mathcal K_0$ with Lie algebra $\kk_0=\textrm{span }\{E_1,E_2\}$ as above $\nu=E_3$ and the mean curvature is
$$H=-2\lambda.$$
Thus, Equation \eqref{eq-EvolutionKilling1} gives
$$\mu-\lambda e^{\lambda z}(\theta x^2+ \kappa y^2)= 2\lambda$$
from which we deduce that $\mu=2\lambda$ and $\theta=\kappa=0$. Therefore, since $\tilde{T}_1$ is tangencial, $\mathcal K_0$ is a translator with the isometry associated to the Killing vector field $V= 2\lambda  \tilde{E}_3$. 
\item  Take the Lie subgroup  $\mathcal K_{3, c,d}$ with Lie subalgebra  $\kk_{3,c,d}=\textrm{span }\{E_1+cE_2,E_3+dE_2\}$ with $c,d\in \RR$. The unit normal vector is  given by  $\nu=\frac{1}{\sqrt{1+c^2+d^2}}(cE_1-E_2+dE_3)$ and the trace of $\ad_{\nu}$ equals $2\lambda\frac{d}{\sqrt{1+c^2+d^2}}$. Thus, from Equation \eqref{eq-EvolutionKilling1} we obtain
$$\begin{array}{rcl}
			2\lambda d & = & ce^{-\lambda z}(\eta+\mu \lambda x -\rho y)+ 2\theta x c - e^{-\lambda z}  (\beta +\mu \lambda y + \rho x) - 2\kappa y+\\
			&  & + d  e^{-\lambda z}(-\kappa y^2 - \theta x^2) + d \mu.
		\end{array}
$$	
Following a reasoning analogous to previous cases we conclude that $d=0$ and the surface is minimal.

\item  For $\mathcal K_{4,e,f}$ with Lie algebra $\kk_{4,e,f}=\textrm{span }\{E_2+eE_1, E_3+fE_1\}$ with $c,d\in \RR$ we the unit normal given by    $\nu=\frac{1}{\sqrt{1+e^2+f^2}}(eE_2-E_1+fE_3)$. The trace of $\ad_{\nu}$ (which is related to the mean curvature through Lemma \ref{lema1}) is  $2\lambda\frac{f}{\sqrt{1+e^2+f^2}}$. 
			
As in the previous case, the equations imply that $f=0$ and $\mathcal K_{4,e,0}$ is minimal.
\end{enumerate}		
		\subsection {The Lie group $S'_{3,\lambda}$ }
		A basis of left-invariant vector fields is given by:
		$$E_1(x,y,z)=e^{\lambda z}[\cos(z)\partial_x-\sin(z)\partial_y],\,  E_2(x,y,z)=e^{\lambda z}[\sin(z)\partial_x +\cos(z)\partial_y], \, E_3(x,y,z) = \partial_z. $$
	They satisfy the following non-trivial Lie bracket relations:
		$$[E_3,E_1]=\lambda E_1-E_2, \qquad [E_3,E_2]=E_1+\lambda E_2.$$
		
		In this group a basis of right-invariant vector fields corresponding to left-translations is
		\begin{align*}
			\widetilde{E}_1(x,y,z) & = \frac{d}{ds}\Big|_{s=0}(s,0,0)(x,y,z)=\partial_x=e^{-\lambda z}[\cos(z)E_1+\sin(z)E_2],\\
			\widetilde{E}_2(x,y,z) & =  \partial_y= e^{-\lambda z}[-\sin(z)E_1+\cos(z)E_2],\\
			\widetilde{E}_3(x,y,z)  &=  (\lambda x +y)\partial_x + (-x+\lambda y) \, \partial_y+ \partial_z,\quad \mbox{ and} \quad \partial_z=E_3.		\end{align*}

			
		 In addition, for each $a\in \RR$ one has the following monoparametric group of orthogonal automorphisms of  $S'_{3,\lambda}$:
			$$T_u(x,y,z)=\left(\begin{matrix}
				\cos(au) & \sin(au) & 0\\
				-\sin(au) & \cos(au) & 0 \\
				0 & 0 & 1
			\end{matrix} \right)\left(\begin{matrix}
				x\\y\\z \end{matrix} \right).$$
			The corresponding Killing vector field is:
			$$\widetilde{T}(x,y,z)=a (y \partial_x -  x \partial_y).$$
		Then, a general vector field can be written as $$V=\eta \tilde{E}_1+\beta \tilde{E}_2+ \mu \tilde{E}_3 + \rho \tilde{T}.$$
			
			Recall from Proposition \ref{prop33} that the only Lie subgroup in this case is related to 
	 the abelian subalgebra $\kk=\textrm{span }\{E_1,E_2\}$. In this case $\nu=E_3$ and the mean curvature is
			$$H=-2\lambda.$$ 
			
			Hence,  Equation \eqref{eq-EvolutionKilling1} together with $g(\tilde{E}_1, \nu)=g(\tilde{E}_2,\nu)=g(\tilde{T},\nu) = 0$ gives
			$$\mu=2\lambda.$$
			Thus, $\mathcal K_0$ is a translator with isometries associated to $V=2\lambda \tilde{E}_3$. Note that directions   $\tilde{E}_1$, $\tilde{E}_2$  and $\tilde{T}$
			are tangential, hence they can be assumed 0 after reparametrizing.



 \section{The evolution of subgroups} \label{sec:solutions} 
 We conclude this work by explicitly writing the evolving subgroups that are not minimal. We also include explicit  solutions in the case that the evolutions are not self-similar
 

 
  \subsection{Translating solutions}
 \smallskip

 From our previous computations we have that our self-similar solutions are defined by translation, which for instance in the case of $S_{3,\lambda}$  implies that the evolution is given by
 $$\varphi(x,y,t)=(e^{2\lambda^2t}x,e^{2\lambda^2t}y, 2\lambda t),  \hbox{ with } x,y\in \RR. $$
 After re-parametrization, this is equivalent to 
  $$\varphi(x,z,t)=(x,y, 2\lambda t),  \hbox{ with } x,y\in \RR. $$
  
 We write the evolutions in this simple form and then  have the following self-similar solutions to Mean Curvature Flow:
 
  \begin{itemize}
   
 \item On $S_{3,\lambda}$ we have the solution 
  $$\varphi(x,z,t)=(x,y, 2\lambda t),  \hbox{ with } x,y\in \RR. $$

 \smallskip
 
 \item  
   On $S_{3,\lambda_1, \lambda_2}$ we have the solutions
 \begin{align*}   
 \varphi_1(x,y,t)&= ( x, y, (\lambda_1+\lambda_2)t )  \hbox{ with } x,y\in \RR, \, \hbox{ and }\lambda_2\neq 0, \\
 \varphi_2(x,y,t)&= (x, y, \lambda_1t )\hbox{ with } x,y\in \RR, \,  \hbox{ and }\lambda_2= 0,\\
  \varphi_3(x,z,t)&= (x, bz-b\lambda_1 t, z)\hbox{ with } x,y, b\in \RR, \,  \hbox{ and } \lambda_2= 0.
 \end{align*}



 \item  On $S'_{3,\lambda}$.
 $$\varphi(x,y)= (x,y, 2\lambda t) .$$
 
 
 \end{itemize}
\subsection{Non-translating solutions}
Inspired by the previous computations, we can compute evolutions for  non-commutative subgroups  that are not given by translations. 
To this end we propose an ansatz of solution that has constant mean curvature in space for each fixed time (but it depends on time  $t$).
 More precisely, we construct the proposed solutions as follows:
first observe that our computations in Section \ref{sec:subgroups} show that for subgroups that are not abelian the mean curvature depends on a parameter that defines the group and here we call it generically $A$. In all our examples,  the parametrization of the group includes this parameter $A$ 
multiplied by a factor of the form $\frac{e^{\lambda z}-1}{\lambda}$ (here $\lambda$ refers generically to a parameter that may be $\lambda$, $\lambda_1$ or $\lambda_2$).  In terms of the tangent vectors (that are in the Lie algebra) this is equivalent to a factor of the form $A e^{\lambda z}$. Then we propose a solution that modifies this factor in the Lie algebra by a function of time, namely
by $A c(t) e^{\lambda z}$, where $c(0)=1$. 
 In the parametrization of the
evolution this is equivalent to a term of the form  $\frac{c(t)  e^{\lambda z}-1}{\lambda} A$. A direct computation with this ansatz gives a simple ODE for $c$ and  
this allow us to obtain  the list of solutions in Theorem \ref{thm:main2}, that can be verified by explicitly computing their evolutions. For completeness we include one of this verifications in detail and since the other ones are analogous, are left to the interested reader.

\subsubsection{The evolution of $\mathcal K_{1,b}$ in $S_{3,\lambda_1, \lambda_2}$ }
In the following, we compute the evolution under the MCF of the subgroup $\mathcal K_{1,b}$ (see Proposition \ref{prop22} (ii) (a) for $\lambda_2\neq 0$). In fact, consider the one-parameter family of submanifolds with respect to $t$ given by $$\mathcal K^t_{1,b}=\left\{\varphi^t(x,z)=\left(x,\frac{(e^{\lambda_2 z + (\lambda_1+\lambda_2)\lambda_2t}-1)}{\lambda_2}b,z\right): x,z\in \mathbb R\right\}.$$ Here, for every $t$, the tangent space of $\mathcal K^t_{1,b}$ is generated by
\begin{align*}
\tau_1^t=& \partial_x=e^{-\lambda_1 z} E_1,\\
\tau_2^t=&b e^{\lambda_2 z +(\lambda_1+\lambda_2)\lambda_2t} \partial_y+\partial_z = b  e^{ (\lambda_1+\lambda_2)\lambda_2t}E_2+E_3.
\end{align*}
Then the unit normal associated to $\mathcal K^t_{1,b}$ can be written as $\nu^t=\frac{be^{(\lambda_1+\lambda_2)\lambda_2t}E_3-E_2}{\sqrt{1+b^2 e^{-2(\lambda_1+\lambda_2)\lambda_2t}}}$.

The first fundamental form is given by $E=e^{-2 \lambda_1 z}$, $F=0$ and $G=1+b^2 e^{2(\lambda_1+\lambda_2)\lambda_2t}$. 

In addittion, considering the computations of the connection given by \cite[Section 3.2]{AOPS} we have that
\begin{align*}
\nabla_{\tau_1^t} \tau_1^t &=  \nabla_{e^{-\lambda_1 z} E_1} (e^{-\lambda_1 z} E_1) = e^{-2\lambda_1 z} \; \nabla_{E_1} E_1 = \lambda_1 e^{-2\lambda_1 z} E_3,\\
\nabla_{\tau_2^t} \tau_2^t &=  \nabla_{b  e^{ (\lambda_1+\lambda_2)\lambda_2t}E_2+E_3} (b  e^{(\lambda_1+\lambda_2)\lambda_2t}E_2+E_3) \\
 &= b  e^{ (\lambda_1+\lambda_2)\lambda_2t} \nabla_{E_2} (b  e^{ (\lambda_1+\lambda_2)\lambda_2t}E_2+E_3) + \nabla_{E_3} (b  e^{(\lambda_1+\lambda_2)\lambda_2t}E_2+E_3) \\
 &=  b^2  e^{ 2(\lambda_1+\lambda_2)\lambda_2t} \nabla_{E_2} E_2 + b  e^{(\lambda_1+\lambda_2)\lambda_2t} \nabla_{E_2} E_3 + b  e^{ (\lambda_1+\lambda_2)\lambda_2t} \nabla_{E_3} E_2 + \nabla_{E_3} E_3 \\
 &= \lambda_2 b^2  e^{2(\lambda_1+\lambda_2)\lambda_2t} E_3 - \lambda_2 b  e^{(\lambda_1+\lambda_2)\lambda_2t} E_2.
\end{align*} Following \cite[Section 5.1]{AOPS} and using that $F=0$, we conclude that the mean curvature is 

$$-H^t =\frac{1}{EG}\left[G g(\nabla_{\tau_1^t}\tau_1^t,\nu^t) + E g(\nabla_{\tau_2^t}\tau_2^t,\nu^t)\right] =\frac{b (\lambda_1+\lambda_2) e^{ (\lambda_1+\lambda_2)\lambda_2t}}{\sqrt{1+b^2 e^{2(\lambda_1+\lambda_2)\lambda_2t}}}.$$

On the other hand,  
\[
\frac{\partial}{\partial t} \varphi^t= b (\lambda_1+\lambda_2) e^{\lambda_2 z + (\lambda_1+\lambda_2)\lambda_2t} \partial_y =  b (\lambda_1+\lambda_2) e^{(\lambda_1+\lambda_2)\lambda_2t}	E_2,
\] and therefore, $g\left(\frac{\partial}{\partial t} \varphi^t,\nu^t\right)=-H^t$.\\

\section{A counterexample}\label{counterex}

Consider the Lie group $G$ given by  the semidirect product of $\RR$ and the Heisenberg Lie group $H_3$ by the automorphism $\tau_{s/2}$ of $H_3$ defined as
$$\tau_{\frac{s}2}(x,y,z)=(e^{\frac{s}2} x, e^{\frac{s}2}y, e^{s}z).$$
Considering the differentiable structure of $\RR^4$, the product on $G$ can be written as
$$(s_1,x_1,y_1,z_1)(s_2,x_2,y_2,z_2)=\left(s_1+s_2, x_1+e^{\frac{s_1}2}x_2, y_1+ e^{\frac{s_1}2}y_2, z_1+e^{s_1}z_2+e^{\frac{s_1}2}\frac{(x_1y_2-x_2y_1)}2\right).$$
We have the following basis of left-invariant vector fields.
$$E_0(s,x,y,z)=\partial_s, \qquad E_1(s,x,y,z)=e^{s/2}\left(\partial_x-\frac{y}2\partial_z\right),$$  $$E_2(s,x,y,z)=e^{s/2}\left(\partial_y+\frac{x}2\partial_z\right), \qquad E_3(s,x,y,z)=e^s\partial_z,$$
which satisfy the non-trivial Lie bracket relations
$$[E_0,E_1]=\frac12E_1\quad [E_0,E_2]=\frac12E_2 \quad [E_0,E_3]=E_3\quad [E_1,E_2]=E_3.$$
On the other hand, right-invariant vector fields can be generated by the following basis.
\begin{align*}
\tilde{E}_0(s,x,y,z)&=\partial_s+\frac{x}2 \partial_x+\frac{y}2\partial_y+z\partial_z =E_0+\frac{e^{-s/2}}2(x E_1+yE_2) + ze^{-s}E_3,\\
\tilde{E}_1(s,x,y,z)&=e^{-s/2}E_1+\frac{y}2 e^{-s}E_3, \qquad \tilde{E}_2(s,x,y,z)=e^{-s/2}E_2-\frac{x}2e^{-s}E_3,\\
  \tilde{E}_3(s,x,y,z)&=e^{-s} E_3.\end{align*}
 
 Consider the subgroup $\mathcal K$ that is isomorphic to the Heisenberg Lie group. Namely,
 $$\mathcal K =\{(0,x,y,z) : x,y,z\in\RR\}.$$
 This is a (not abelian) 2-step nilpotent Lie subgroup, whose   Lie algebra of $\mathcal K$ is spanned by the vectors $E_1,E_2, E_3$.
 
 As before, we may fix the left-invariant metric on $G$ that makes the basis above  orthonormal, obtaining the following.
 $$g=ds^2+ e^{-2s}dz^2+e^{-s}\left(1+\frac{y^2}4 e^{-s}\right)dx^2+e^{-s}\left(1+\frac{x^2}4 e^{-s}\right)dy^2+\frac{y}2 e^{-2s}dydz-\frac{x}2 e^{-2s}dydz.$$
 
The unitary normal vector on $\mathcal K$ is given by the left-invariant vector field $\nu:=E_0$ and 
 by Lemma \ref{lema1} the mean curvature is given by
 $$H=-2.$$
 
Consider a Killing vector field given by  $V=\gamma \tilde{E}_0$. Imposing Equation \eqref{eq-EvolutionKilling1} we obtain
 $$2= g(\gamma \tilde{E}_0, E_0).$$
 This implies $\gamma=2$ and since
 $\tilde{E}_0$ corresponds to the translation on the left by $(t,0,0,0)$ we have the following translating solution
 $$\mathcal K_t=\{(2t,x,y,z): x,y,z\in \RR\}.$$

\medskip

\noindent{\bf Acknowledgments.}
This project started at the ``Matem\'aticas en el Cono Sur 2'' held at Facultad de Ingenier\'ia (UdelaR), Montevideo, Uruguay, in February 18-23, 2024. \\ The authors would like to thanks Prof. Marcos Salvai for fruitful conversations.\\ RA and GO acknowlegde Pontificia Universidad Cat\'olica de Chile for hosting them under the program ``Concurso Movilidad de Profesores y Apoyo a Visitantes Extranjeros''.

\section{Declarations}

\noindent{\bf Conflict of interest.} No potential conflict of interest is reported by the authors.

\end{document}